\documentclass[12pt, reqno]{amsart}
\usepackage{amsmath,amssymb,amsthm}
\usepackage{mathtools}
\usepackage{stmaryrd} 
\usepackage[inline]{enumitem}
\setlength{\parindent}{0.9em}
\usepackage{graphicx,setspace}
\usepackage{tikz}
\usetikzlibrary{arrows}
\usetikzlibrary{intersections, decorations.markings}
\usepackage{wasysym}
\usepackage[makeroom]{cancel}
\singlespacing
\usepackage{fullpage}
\usepackage{comment}
\usepackage{caption}
\usepackage{mathtools}
\usepackage{url}
\usepackage{multirow}
\usepackage{soul}
\usepackage{chngcntr}
\usepackage{mathrsfs}
\usepackage{amsthm}
\usepackage{enumitem}

\newlist{renumerate}{enumerate}{3}
\setlist[renumerate]{label=\roman*),before=\raggedright}
\newlist{arenumerate}{enumerate}{3}
\setlist[arenumerate]{label=\arabic*),before=\raggedright}
\newlist{aenumerate}{enumerate}{3}
\setlist[aenumerate]{label=\alph*),before=\raggedright}

\counterwithin{equation}{section}
\allowdisplaybreaks

\newcommand{\znz}[1]{\mathbb{Z}/#1\mathbb{Z}}
\newcommand{\znzznz}[2]{\mathbb{Z}/#1\mathbb{Z} \times \mathbb{Z}/#2\mathbb{Z}}

\theoremstyle{plain}
\newtheorem{theorem}{Theorem}[section]

\newtheorem{lemma}[theorem]{Lemma}
\newtheorem{corollary}[theorem]{Corollary}

\newtheorem*{claim*}{Claim}
\theoremstyle{definition}

\newtheorem{definition}[theorem]{Definition}
\newtheorem{example}[theorem]{Example}

\DeclareMathOperator{\Endo}{End}
\DeclareMathOperator{\Gal}{Gal}

\def\F{\mathbb{F}}
\def\N{\mathbb{N}}
\def\Z{\mathbb{Z}}
\def\Q{\mathbb{Q}}
\def\R{\mathbb{R}}

\def\End{\operatorname{End}}

\baselineskip=17pt

\newcommand{\Fq}{\mathbb{F}_q}

\newcommand{\closedFq}{\overline{\Fq}}

\newif\ifshowold 
\showoldfalse 
\setlength\parindent{1cm}

\begin{document}
\title{On Isomorphic $K$-Rational Groups of Isogenous Elliptic Curves Over Finite Fields}

\author[1]{Liljana Babinkostova}
\address{Boise State University}
\email{liljanababinkostova@boisestate.edu}

\author[2]{Andrew Gao}
\address{University of California, Berkeley}
\email{agao@berkeley.edu}

\author[3]{Ben Kuehnert}
\address{University of Rochester}
\email{bkuehner@u.rochester.edu}

\author[4]{Geneva Schlafly}
\address{University of California, Santa Barbara}
\email{gschlafly@ucsb.edu}

\author[5]{Zecheng Yi}
\address{Johns Hopkins University}
\email{zeyi@jhu.edu}


\thanks{{\bf Acknowledgements:}  The research was provided by the National Science Foundation under the grant DMS-1062857 and Boise State University.}
\subjclass[2010]{14H52, 14K22, 11Y01, 11N25, 11G07, 11G20, 11B99} 
\keywords{Elliptic curves, Mappings, Isogenies, Orders of Elliptic Curves, j--invariants.}

\maketitle
\begin{abstract}
It is well known that two elliptic curves are isogenous if and only if they have same number of rational points. In fact, isogenous curves can even have isomorphic groups of rational points in certain cases.
In this paper, we consolidate all the current literature on this relationship and give a extensive classification of the conditions in which this relationship arises. First we prove two ordinary isogenous elliptic curves have isomorphic groups of rational points when they have the same $j$-invariant. Then, we extend this result to certain isogenous supersingular elliptic curves, namely those with equal $j$-invariant of either 0 or 1728,
using Vl\u{a}du\c{t}'s characterization of the group structure of rational points. We finally introduce and use a method by Heuberger and Mazzoli to establish a general case of this relationship within isogenous elliptic curves not necessarily having equal $j$-invariant, and discuss the required parameters for their method.

\end{abstract}
\section{Introduction}
Elliptic curves have long been a popular subject of interest in number theory due to their rich algebraic structure and their unique morphisms known as isogenies. In 1966, Sato and Tate proved that two elliptic curves defined over the same field are isogenous if and only if they have the same number of rational points in \cite{Tate}. This theorem offers a helpful tool to relate the study of multiple elliptic curves' algebraic structures to that of the isogenies between them.

The group structure of the rational points of an elliptic curve over a finite field is a central area of study of the algebraic structure of these objects as abelian varieties. In 1999, Vl\u{a}du\c{t} in \cite{Vladut} describes the situations in which these elliptic curves have a cyclic group of rational points based on its trace of Frobenius and the finite field over which the elliptic curve is defined. In 2001, Wittmann determines the possible group structures of $E(\F_{q^r})$ for all elliptic curves defined over $\F_q$ with isomorphic group of rational points. We will combine these two results to characterize the differences in the group structure of rational points of an elliptic curve based on the field that elliptic curve is defined over. Specifically, for an elliptic curve defined over $\F_{p^r}$, we will show which different structures $E(\F_{p^r})$ can have depending on whether $r$ is odd or even. This allows us to link the group structures of $E(\F_{q^r})$ and $E(\F_{q^s})$ when $r$ and $s$ have different parity. In this context, we also condense Vl\u{a}du\c{t}'s work to analyze the occurrence of both cyclic and non-cyclic groups of rational points. 

With this classification of these groups of rational points at hand, we then study the situations in which two isogenous curves have isomorphic groups of rational points. We first give a proof that two elliptic curves with the same $j$-invariant are isogenous if and only if they have isomorphic groups of rational points in the ordinary case. We then extend this statement to isogenous supersingular elliptic curves when their $j$-invariants are either 0 or 1728. Finally, we give a brief introduction to the tool introduced by Heuberger and Mazzoli in \cite{Heuberger} to decide whether two elliptic curves have isomorphic groups of rational points if their cardinalities are same. This method does not require two elliptic curves to have same $j$-invariant, and thus it can be used to classify groups of rational points in the more general case. However, their method requires using additional parameters. Here we analyze the utility of certain parameters and their consequences.

\section{Preliminaries} \label{SectionPrelim}
\subsection{Elliptic Curves} 

We introduce some elementary features of elliptic curves. Let $K$ be a field and $\overline{K}$ be its algebraic closure. An elliptic curve $E$ is an algebraic curve defined by a minimal Weierstrass equation
\begin{align*}
    y^2 + a_1xy + a_3y = x^3 + a_2x^2 + a_4x + a_6
\end{align*}
where $a_i \in K$. In this paper, fields will be assumed to have characteristic not 2 or 3, and in this case we can use the Weierstrass normal form given by
\begin{align*}
	E: y^2 = x^3 + Ax + B,
\end{align*} 
with $A,B\in K$. An elliptic curve is a nonsingular algebraic curve, and a curve is nonsingular if the discriminant $\Delta = -16(4a^3 + 27B^2)$ is nonzero. 



The $j$-invariant of an elliptic curve is an important invariant of curves. For a curve $E:y^2=x^3+Ax+B$ in Weierstrass normal form, the $j$-invariant is given by
$$j(E)=1728\cdot \frac{4A^3}{4A^3+4B^2}.$$

For an elliptic curve $E$ defined over a field $K$, we will define two important sets. The first is 

$$E/K=E(\overline{K})=\{(x,y)\in\overline{K}^2: y^2=x^3+Ax+B\}\cup\{\infty\}$$
where the element $\infty$ is called the \textit{point at infinity}, and will serve as an identity element. Note that this set is infinite in general. The second set is the $K$-rational points, denoted
$$E(K)=\{(x,y)\in K^2 : y^2=x^3+Ax+B\}\cup\{\infty\}.$$
 These sets are both Abelian groups when equipped with the following group operation:

\subsubsection{Group Law}
For two points $P$ and $Q$ on an elliptic curve, the point $P+Q$ is computed by first calculating the line intersecting both $P$ and $Q$. This line will intersect the elliptic curve at a third point, call it $R$. Finally, draw a vertical line through $R$ and denote the point at which this line intersects the curve as $P+Q$. Points that share a vertical line are inverses, and when adding a point to itself, we use the tangent line. See Figure 1 for some examples of this for an elliptic curve defined over $\R$. \vspace{0.5cm}

\begin{figure}
    \centering
    \includegraphics[scale=1]{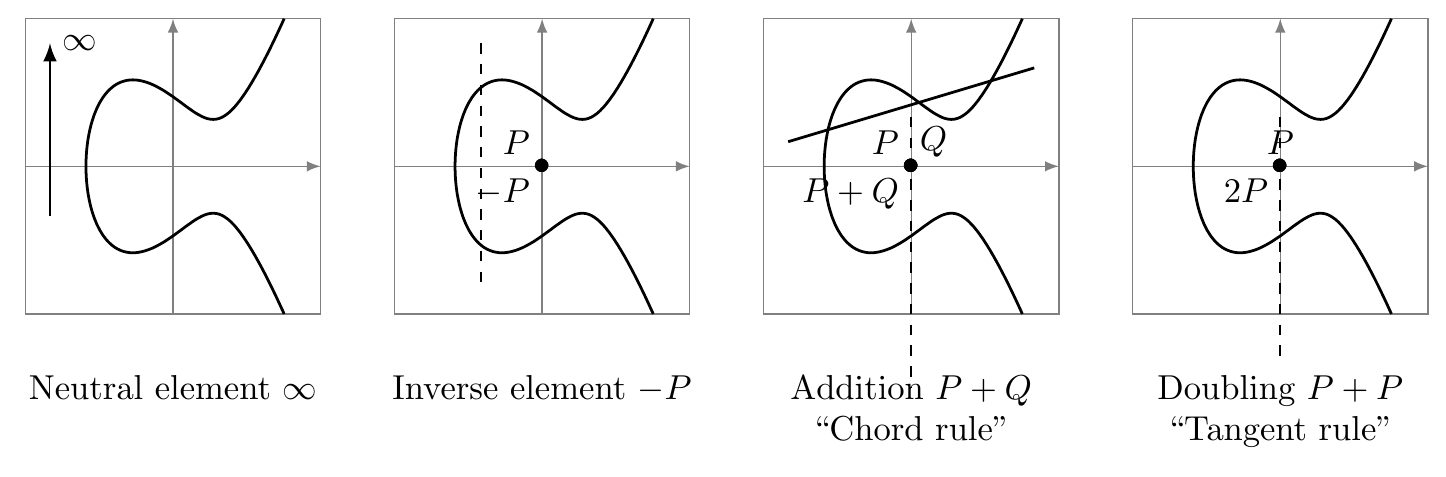}
    \caption{Group Law of Elliptic Curves}
    \label{fig:my_label}
\end{figure}

Note that $E(K)\subseteq E(\overline{K})$, in particular, $E(K)$ is a subgroup of $E(\overline{K})$. Let $q=p^r$ where $p$ is prime and $r\in\N$. When $K$ is the finite field $\F_q$, $E(K)$ is a finite abelian group and is classified by the following theorems:

\begin{theorem}[Hasse]
    Let $E$ be an elliptic curve over a finite field $\F_q$. Then,
    $$\#E(\F_q) = q+1-t$$
    where $|t|\leq 2\sqrt{q}$.
\end{theorem}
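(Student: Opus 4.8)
The plan is to realize $E(\F_q)$ as the kernel of an isogeny and then to bound the size of that kernel using the fact that the degree map is a positive-definite quadratic form on the endomorphism ring. First I would introduce the $q$-power Frobenius endomorphism $\phi \colon E \to E$, $(x,y) \mapsto (x^q, y^q)$, and observe that a point $P \in E(\overline{\F_q})$ lies in $E(\F_q)$ precisely when its coordinates are fixed by the $q$-power map, i.e. when $\phi(P) = P$. Hence $E(\F_q) = \ker(\phi - 1)$, where $1$ denotes the identity endomorphism and $\phi - 1$ is formed in the endomorphism ring $\End(E)$.

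Next I would reduce the point count to a degree computation. For any nonzero isogeny $\psi$, the number of points in $\ker\psi$ equals its separable degree, so I would check that $\phi - 1$ is separable by examining its action on the invariant differential (equivalently, on the tangent space at the identity): since $\phi$ acts as $0$ there (because $\mathrm{d}(x^q) = q\,x^{q-1}\,\mathrm{d}x = 0$ in characteristic $p$), the map $\phi - 1$ acts as $-1 \ne 0$ and is therefore separable. Consequently $\#E(\F_q) = \#\ker(\phi - 1) = \deg(\phi - 1)$. Writing $t := q + 1 - \#E(\F_q)$ for the trace of Frobenius, this identity reads $\deg(\phi - 1) = q + 1 - t$, and it remains to bound $t$.

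The key structural input is that $\deg \colon \End(E) \to \Z$ is a positive-definite quadratic form: it satisfies $\deg(m\alpha) = m^2\deg(\alpha)$ together with the parallelogram law, and $\deg(\psi) \ge 0$ with equality iff $\psi = 0$; all of this follows from the existence of the dual isogeny and the multiplicativity of degree. Granting this, its associated symmetric bilinear form $B$, characterized by $\deg(\alpha - \beta) = \deg(\alpha) + \deg(\beta) - B(\alpha,\beta)$, satisfies $B(\phi,1) = \deg(\phi) + \deg(1) - \deg(\phi - 1) = q + 1 - (q + 1 - t) = t$. I would then expand, for arbitrary integers $m,n$,
\[
0 \le \deg(m\phi - n) = m^2\deg(\phi) + n^2\deg(1) - mn\,B(\phi,1) = q\,m^2 - t\,mn + n^2 .
\]
Since this quadratic form in $(m,n)$ is nonnegative on all of $\Z^2$, and hence by homogeneity and density on all of $\R^2$, its discriminant must be nonpositive: $t^2 - 4q \le 0$, that is $|t| \le 2\sqrt{q}$, which is the claim.

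I expect the main obstacle to be establishing the positive-definite quadratic-form property of $\deg$, as it is the one ingredient that is not purely formal: proving $\deg(m\alpha) = m^2\deg(\alpha)$ and the parallelogram law rests on the theory of the dual isogeny $\hat{\psi}$ with $\psi\hat{\psi} = \deg(\psi)$, and positivity requires knowing that only the zero isogeny has degree $0$. A secondary technical point is the separability of $\phi - 1$, which must be verified so that the kernel size equals the full degree rather than merely its separable part.
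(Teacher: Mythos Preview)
The paper does not actually prove Hasse's theorem; it merely states it as a classical result without proof (it appears in the Preliminaries section as background, with no accompanying argument or even a reference to a specific source for the proof). Consequently there is nothing in the paper to compare your proposal against.

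That said, your proposal is the standard correct proof, essentially the one found in Silverman's \emph{The Arithmetic of Elliptic Curves}: identify $E(\F_q)$ with $\ker(\phi-1)$, check separability of $\phi-1$ via its action on the invariant differential so that $\#E(\F_q)=\deg(\phi-1)$, and then use that $\deg$ is a positive-definite quadratic form on $\End(E)$ to force the binary form $qm^2 - tmn + n^2$ to have nonpositive discriminant. Your identification of the two nontrivial inputs---the quadratic-form property of $\deg$ (resting on the dual isogeny) and the separability of $\phi-1$---is accurate, and nothing in the outline is incorrect or incomplete.
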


\begin{theorem}\label{later}
    Let $E$ be an elliptic curve over a finite field $\F_{q}$. Then, 
    $$E(\F_{q}) \cong (\Z/n_1\Z)\oplus(\Z/n_2\Z)$$
    where $n_1n_2=\#E(\F_q)$ and $n_1 | n_2$.
\end{theorem}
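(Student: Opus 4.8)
The plan is to invoke the classification of finite abelian groups and then bound the number of invariant factors by two, using the structure of the torsion subgroups of $E$. Since $E(\F_q)$ is a finite subgroup of $E(\overline{\F_q})$, it is a finite abelian group, and the fundamental theorem of finite abelian groups gives a decomposition into invariant factors
$$E(\F_q) \cong \Z/m_1\Z \oplus \Z/m_2\Z \oplus \cdots \oplus \Z/m_k\Z, \qquad m_1 \mid m_2 \mid \cdots \mid m_k .$$
It then suffices to prove that $k \leq 2$: if $k=2$ we are done with $n_1 = m_1$ and $n_2 = m_2$, and if $k=1$ we pad the decomposition with a trivial factor by setting $n_1 = 1$ and $n_2 = m_1$, so that $n_1 \mid n_2$ holds vacuously. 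The cardinality relation $n_1 n_2 = \#E(\F_q)$ is then immediate from counting elements of a direct product.

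First I would record the standard fact that the number of invariant factors equals $\max_\ell \dim_{\F_\ell} E(\F_q)[\ell]$, where $\ell$ ranges over primes and $E(\F_q)[\ell]$ denotes the $\ell$-torsion subgroup. Indeed, each invariant factor $\Z/m_i\Z$ with $\ell \mid m_i$ contributes exactly one dimension to the $\ell$-torsion, so the largest such dimension counts how many of the $m_i$ are divisible by the worst-behaved prime. Thus the entire theorem reduces to showing that for every prime $\ell$, the group $E(\F_q)[\ell]$ is generated by at most two elements.

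The key input, and the main obstacle, is the structure of the full $\ell$-torsion of $E$ over the algebraic closure. For a prime $\ell \neq p$, where $p = \operatorname{char}(\F_q)$, the multiplication-by-$\ell$ endomorphism $[\ell]$ is a separable isogeny of degree $\ell^2$, so its kernel satisfies $E(\overline{\F_q})[\ell] \cong (\Z/\ell\Z)^2$; for $\ell = p$, one has instead $E(\overline{\F_q})[p] \cong 0$ or $\Z/p\Z$ according to whether $E$ is supersingular or ordinary. Establishing these isomorphisms is the technical heart of the argument, as it rests on computing the degree of $[\ell]$ and deciding its separability; this is standard and I would cite Silverman rather than reprove it.

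Granting the torsion structure above, the proof concludes quickly. Since $E(\F_q)[\ell] \subseteq E(\overline{\F_q})[\ell]$ is a subgroup of a group generated by at most two elements, it too is generated by at most two elements, so $\dim_{\F_\ell} E(\F_q)[\ell] \leq 2$ for every prime $\ell$. Hence $k = \max_\ell \dim_{\F_\ell} E(\F_q)[\ell] \leq 2$, which is exactly the bound we needed, and the stated two-factor decomposition follows.
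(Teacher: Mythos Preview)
Your proof is correct and follows essentially the same route as the paper: decompose $E(\F_q)$ into invariant factors via the structure theorem for finite abelian groups, then bound the number of factors by $2$ using the torsion structure $E[n]\hookrightarrow(\Z/n\Z)^2$. The only cosmetic difference is that the paper counts elements of order dividing the smallest invariant factor $n_1$ directly (getting $n_1^r\le n_1^2$), whereas you reduce first to prime torsion and phrase the bound as $\dim_{\F_\ell}E(\F_q)[\ell]\le 2$.
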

\noindent The proof of this theorem will be left later after we introduce the notion of torsion points in section \ref{endomorphism}. 
\begin{definition}
    Let $E$ be an elliptic curve over a finite field $\F_{p^r}$. We call $E$ \textbf{supersingular} if $\#E(\F_{p^r})\equiv 1 \mod p$. Elliptic curves that are not supersingular are called \textbf{ordinary} elliptic curves.
\end{definition}{}





\subsection{Maps Between Elliptic Curves}
    The maps between elliptic curves will be crucial in understanding their structure. Let $E$ and $E'$ be elliptic curves defined over a field $K$. Then, a rational function $f:E(\overline{K})\to E'(\overline{K})$ over $K$ is given by 
    $$f(x,y)=\left(\frac{f_0(x,y)}{f_1(x,y)},\frac{f_2(x,y)}{f_3(x,y)}\right)\quad f_i\in K[x,y]$$
    Alternatively, a rational function defined over $\overline{K}$ would consist of polynomials from $\overline{K}[x,y]$. 
    \par
        For notation purposes, we will denote rational functions as $f:E\to E'$, even though they are technically maps between $E(\overline{K})$ and $E'(\overline{K})$. This is to reinforce that these are maps of \textit{curves} rather than of groups.
    A \textbf{morphism} $f:E\to E'$ over $K$ is a rational function over $K$ that is defined everywhere. The following lemma gives a canonical form for all these morphisms. 
    \begin{lemma}
        Let $f:E\to E'$ be a morphism over $K$. Then, there exist $r_1,r_2,s_1,s_2\in K[x]$ such that
        $$f(x,y)=\left( \frac{r_1(x)}{s_1(x)},\frac{r_2(x)}{s_2(x)}y\right).$$
    \end{lemma}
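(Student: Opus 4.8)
The plan is to work inside the function field $K(E)=K(x)[y]/\bigl(y^2-(x^3+Ax+B)\bigr)$ and exploit the symmetry of $E$ under negation. Since $[K(E):K(x)]=2$ with $y$ integral over $K[x]$, every element of $K(E)$ has a unique expression $g(x)+h(x)\,y$ with $g,h\in K(x)$: one reduces every occurrence of $y^2$ to $x^3+Ax+B$, and clears any $y$ appearing in a denominator $p(x)+q(x)y$ by multiplying numerator and denominator by the conjugate $p(x)-q(x)y$, turning the denominator into $p(x)^2-q(x)^2(x^3+Ax+B)\in K(x)$. Since $f$ is defined over $K$, all coefficients stay in $K$, so I can write its two coordinate functions as
$$f(x,y)=\bigl(u_0(x)+u_1(x)\,y,\; v_0(x)+v_1(x)\,y\bigr),\qquad u_0,u_1,v_0,v_1\in K(x).$$

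Next I would record the structural input that drives everything: as a morphism of elliptic curves fixing the point at infinity, $f$ is a group homomorphism, hence commutes with negation, so that $f(x,-y)=-f(x,y)$, where $-$ denotes inversion on $E'$. This hypothesis is genuinely needed: translation by a point with nonzero $y$-coordinate is a morphism of \emph{curves} whose first coordinate really does involve $y$, so the canonical form holds only for maps respecting the identity. Granting it, the argument is a direct comparison. Substituting $-y$ for $y$ on the left gives $f(x,-y)=\bigl(u_0(x)-u_1(x)y,\ v_0(x)-v_1(x)y\bigr)$, while inversion on $E'$ sends $(X,Y)\mapsto(X,-Y)$, so $-f(x,y)=\bigl(u_0(x)+u_1(x)y,\ -v_0(x)-v_1(x)y\bigr)$. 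Equating first coordinates forces $u_1(x)\,y=-u_1(x)\,y$, hence $u_1\equiv 0$; equating second coordinates forces $v_0(x)=-v_0(x)$, hence $v_0\equiv 0$. Thus $f(x,y)=\bigl(u_0(x),\,v_1(x)\,y\bigr)$, and writing $u_0=r_1/s_1$ and $v_1=r_2/s_2$ with $r_i,s_i\in K[x]$ gives exactly the claimed form.

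The main obstacle is not the computation but justifying the two clean structural facts. The first is the unique normal form $g(x)+h(x)y$, which rests on $[K(E):K(x)]=2$ together with the integrality of $y$ over $K[x]$ (so that the conjugate trick rationalizes denominators). The second, and the real crux, is the identity $f\circ[-1]=[-1]\circ f$; the cleanest route is to invoke that a morphism of smooth projective curves carrying $O_E$ to $O_{E'}$ is automatically a group homomorphism. Should one wish to avoid that black box, an alternative is to use that the $x$-coordinate function on $E'$ is invariant under the hyperelliptic involution, deduce that its pullback along $f$ must likewise be involution-invariant to force $u_1\equiv0$, and then read off $v_0\equiv0$ from the Weierstrass relation $v^2=u_0(x)^3+A'u_0(x)+B'$ of $E'$ (which, once $u_1=0$, is a function of $x$ alone and so kills the cross term $2v_0v_1\,y$, after ruling out the degenerate case $v_1\equiv0$).
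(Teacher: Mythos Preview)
The paper states this lemma without proof, so there is no argument of its own to compare your proposal against; it is quoting a standard fact (essentially Washington, \S2.9) as background. Your approach---writing the coordinate functions in the normal form $g(x)+h(x)y$ via $[K(E):K(x)]=2$ and then using $f\circ[-1]_E=[-1]_{E'}\circ f$ to kill the unwanted terms---is exactly the classical one and is carried out correctly.

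You are also right to flag the gap in the \emph{statement}: as written, the lemma asserts the canonical form for an arbitrary morphism of curves, but your translation-by-$P$ counterexample shows this fails without the hypothesis $f(\infty_E)=\infty_{E'}$. In the paper this lemma is only ever invoked for isogenies, so the omission is harmless in context, but it is a genuine imprecision in the formulation and your observation would improve it. The alternative route you sketch at the end (pulling back the hyperelliptic-involution-invariant function $x$ on $E'$ to force $u_1=0$, then reading off $v_0=0$ from the Weierstrass relation) also works and has the mild advantage of not needing the black box that base-point-preserving morphisms of elliptic curves are group homomorphisms.
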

    Additionally, all morphisms are either constant or surjective.
    
    \begin{definition}
        Let $E,E'$ be elliptic curves over $K$. A morphism $\alpha:E\to E'$ is an isogeny if is surjective and satisfies $\alpha(\infty_{E})=\infty_{E'}$.
    \end{definition}
     Isogenies play the role of homomorphisms on curves. In fact, an isogeny $\alpha:E\to E'$ defined over $K$ restricts to a group homomorphism $\alpha:E(K)\to E'(K)$. Hence, we will call two curves $E$ and $E'$  isomorphic over $K$ if there exists a pair of isogenies defined over $K$  $\phi: E\to E'$ and $\psi: E' \to E$ such that $\psi \circ \phi = [1]_{E}$ and $\phi \circ \psi =[1]_{E'}$ where $[1]_E$ is the identity map on $E$. Similarly, elliptic curves are isomorphic over $\overline{K}$ if the pair of isogenies are defined over $\overline{K}$. The following theorem is an example of an explicit isogeny over $\overline{K}$.
     
     \begin{theorem}
         Let $E_1:y_1^2=x_1^3+A_1x_1+B_1$ and $E_2:y_2^2=x_2^3+A_2x_2+B_2$ be elliptic curves defined over $K$. If $j(E_1)=j(E_2)$ there exists a $\mu \in \overline{K}$ with
         $$A_2=\mu^4A_1, \quad B_2=\mu^6B_1$$
         such that the transformation
         $$x_2=\mu^2x_1, \quad y_2=\mu^3y_1$$
         is an isomorphism over $\overline{K}$.
     \end{theorem}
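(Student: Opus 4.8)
The plan is to read off the algebraic constraint imposed by the hypothesis $j(E_1)=j(E_2)$, construct $\mu$ explicitly from the coefficients, and then verify by direct substitution that the stated transformation carries $E_1$ onto $E_2$. First I would clear denominators in the equation $j(E_1)=j(E_2)$ using the $j$-invariant formula: cross-multiplying and cancelling the common term $A_1^3A_2^3$ collapses the identity to the single relation $A_1^3B_2^2=A_2^3B_1^2$. This relation is the engine of the whole argument, so I would isolate it before doing anything else.

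Next I would split into the three cases dictated by the value of $j$, using that (under nonsingularity) $j=0$ forces $A=0$ with $B\neq 0$, that $j=1728$ forces $B=0$ with $A\neq 0$, and that otherwise both $A$ and $B$ are nonzero; since $j(E_1)=j(E_2)$, both curves fall in the same case. When $j=0$ I would take $\mu$ to be any sixth root of $B_2/B_1$ in $\overline{K}$, so that $B_2=\mu^6B_1$ while $A_2=0=\mu^4A_1$ holds trivially; the case $j=1728$ is symmetric, with $\mu$ a fourth root of $A_2/A_1$.

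The general case $j\neq 0,1728$ is where the real content lies, and the main obstacle is choosing a single $\mu$ that simultaneously satisfies $A_2=\mu^4A_1$ and $B_2=\mu^6B_1$: a priori a fourth root of $A_2/A_1$ need not produce the correct value of $\mu^6$. My intended fix is to define $\mu$ as a square root of $\tfrac{A_1B_2}{A_2B_1}$, where all four coefficients are now nonzero. Because $\mu^4$ and $\mu^6$ depend only on $\mu^2$, the ambiguity in the choice of square root is irrelevant, and the relation $A_1^3B_2^2=A_2^3B_1^2$ is exactly what is needed to verify that both $\mu^4=A_2/A_1$ and $\mu^6=B_2/B_1$ hold.

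Finally, with $\mu$ in hand I would substitute $x_2=\mu^2x_1$ and $y_2=\mu^3y_1$ into $y_2^2=x_2^3+A_2x_2+B_2$ and divide through by $\mu^6$; the relations $A_2=\mu^4A_1$ and $B_2=\mu^6B_1$ reduce this precisely to $y_1^2=x_1^3+A_1x_1+B_1$, confirming that every point of $E_1$ is sent to a point of $E_2$. Since this map is given by polynomials and fixes the point at infinity, it is an isogeny, and the map $(x_2,y_2)\mapsto(\mu^{-2}x_2,\mu^{-3}y_2)$ built from $\mu^{-1}\in\overline{K}$ is an isogeny of the same shape serving as a two-sided inverse. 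Hence the transformation is an isomorphism defined over $\overline{K}$, as required.
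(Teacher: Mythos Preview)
Your argument is correct. The paper does not actually supply a proof of this theorem; it simply cites Theorem~2.19 of Washington's text, so there is no in-house argument to compare against. That said, what you have written is essentially the standard proof one finds in Washington: extract the relation $A_1^3B_2^2=A_2^3B_1^2$ from the equality of $j$-invariants, handle $j=0$ and $j=1728$ by taking a sixth or fourth root, and in the generic case set $\mu^2=\dfrac{A_1B_2}{A_2B_1}$ so that the single relation simultaneously forces $\mu^4=A_2/A_1$ and $\mu^6=B_2/B_1$. The final substitution and the explicit inverse $(x_2,y_2)\mapsto(\mu^{-2}x_2,\mu^{-3}y_2)$ confirm that the map is an isomorphism over $\overline{K}$. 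Nothing is missing.
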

     
     \begin{proof}
        See Theorem 2.19 in \cite{Wl}.
     \end{proof}
     It is also the case that the converse is true. If two elliptic curves are isomorphic over $\overline{K}$, then they must have the same $j$-invariant.

    Next, we introduce the notion of the dual and show that isogenies actually form equivalence relations. From this, a theorem of Sato and Tate shows that isogeny classes of curves are in one-to-one correspondence with the orders of the curves' $K$-rational points.
    \begin{theorem}
        For every isogeny $\phi:E\to E'$, there exists a unique isogeny $\hat{\phi}:E'\to E$, called the dual that is an isogeny itself.
    \end{theorem}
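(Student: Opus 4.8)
The plan is to construct $\hat\phi$ from the factorization property of isogenies and to pin down uniqueness using the surjectivity of a nonconstant isogeny. Note first that, as stated, the theorem only makes sense once $\hat\phi$ is tied to a characterizing relation, so I would interpret the dual as the unique isogeny satisfying $\hat\phi\circ\phi = [m]$ on $E$, where $m=\deg\phi$ and $[m]$ denotes multiplication by $m$. The essential input is the factorization lemma for isogenies: if $\psi\colon E\to E_1$ and $\phi\colon E\to E'$ are isogenies with $\ker\phi\subseteq\ker\psi$, then $\psi$ factors uniquely through $\phi$, i.e. there is a unique isogeny $\lambda\colon E'\to E_1$ with $\psi=\lambda\circ\phi$. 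I would first treat the separable case, where $\ker\phi$ is a finite subgroup of $E(\overline{K})$ of order exactly $m$; by Lagrange every element of $\ker\phi$ is killed by $m$, so $\ker\phi\subseteq E[m]=\ker[m]$. Applying the factorization lemma to $[m]\colon E\to E$ and to $\phi$ then produces an isogeny $\hat\phi\colon E'\to E$ with $\hat\phi\circ\phi=[m]$, which is the desired dual.

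Next I would establish uniqueness. The set $\operatorname{Hom}(E',E)$ is an abelian group under pointwise addition coming from the group law on $E$, and composition distributes over this addition. If $\hat\phi$ and $\hat\phi{}'$ both satisfy $\hat\phi\circ\phi=[m]=\hat\phi{}'\circ\phi$, then $(\hat\phi-\hat\phi{}')\circ\phi$ is the constant map sending every point to $\infty_E$. Since $\phi$ is a nonconstant isogeny it is surjective, which forces $\hat\phi-\hat\phi{}'$ to vanish identically, and hence $\hat\phi=\hat\phi{}'$. The same surjectivity shows $\hat\phi$ is nonconstant: were it constant, $[m]=\hat\phi\circ\phi$ would be constant, contradicting $m\ge 1$. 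Thus $\hat\phi$ is itself a genuine isogeny, as the statement requires, and one checks symmetrically that $\phi\circ\hat\phi=[m]$ holds on $E'$ as well.

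To remove the separability hypothesis — which matters here since the curves live over finite fields and Frobenius intervenes — I would factor $\phi=\phi_{\mathrm{sep}}\circ\operatorname{Frob}^{e}$ through a power of the $p$-th power Frobenius, build the dual of each factor separately (the Frobenius having a dual extracted from the known factorization of $[p]$), and compose. The main obstacle is the factorization lemma itself: proving that an isogeny with nested kernels factors through the quotient requires either the theory of quotient curves $E/\ker\phi$, constructing $E'$ as that quotient and verifying its universal property, or the Picard-group description $E\cong\operatorname{Pic}^0(E)$ together with functoriality of the pullback $\phi^{*}$ on divisor classes. I would lean toward the latter, defining $\hat\phi$ directly as the composite $E'\xrightarrow{\sim}\operatorname{Pic}^0(E')\xrightarrow{\phi^{*}}\operatorname{Pic}^0(E)\xrightarrow{\sim}E$ and then verifying $\hat\phi\circ\phi=[m]$ on divisor classes; this sidesteps the separability bookkeeping entirely but shifts the real work to establishing the canonical isomorphism $E\cong\operatorname{Pic}^0(E)$ and the degree identity $\phi_{*}\phi^{*}=[\deg\phi]$.
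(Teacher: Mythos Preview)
Your proof sketch is correct and follows the standard argument one finds in Silverman's \emph{The Arithmetic of Elliptic Curves}, Theorem~III.6.1: construct $\hat\phi$ in the separable case from the kernel inclusion $\ker\phi\subseteq E[\deg\phi]$ via the factorization lemma, deduce uniqueness from surjectivity of $\phi$, and handle the inseparable part by factoring through a Frobenius power (or, as you note, bypass this by defining $\hat\phi$ via $\operatorname{Pic}^0$ and the pullback $\phi^*$). You were also right to flag that the statement as written is underdetermined without the characterizing relation $\hat\phi\circ\phi=[\deg\phi]$; that relation is what makes ``unique'' meaningful.

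The paper, however, does not prove this theorem at all. It is stated in Section~\ref{SectionPrelim} as a background fact, with no accompanying proof environment, and is used only to observe that isogeny is a symmetric relation. The paper's references for such preliminaries are \cite{Silverman} and \cite{Wl}, and the intended reading is that the reader should consult those sources for the construction of the dual. So there is no discrepancy to report between your approach and the paper's---you have supplied a proof where the paper simply cited one.
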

    
    \begin{corollary}
        Isogenies form an equivalence relation on elliptic curves.
    \end{corollary}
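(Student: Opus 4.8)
The plan is to verify the three defining properties of an equivalence relation for the relation ``$E$ is isogenous to $E'$,'' which I take to mean that there exists an isogeny $\phi: E \to E'$. Of these, reflexivity and transitivity are essentially formal consequences of the definition of a morphism, while symmetry is exactly the content supplied by the preceding dual isogeny theorem.

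First I would establish reflexivity by exhibiting the identity map $[1]_E: E \to E$. This is a morphism that is manifestly surjective and satisfies $[1]_E(\infty_E) = \infty_E$, so it is an isogeny by definition; hence every curve is isogenous to itself. For transitivity, I would take isogenies $\phi: E \to E'$ and $\psi: E' \to E''$ and check that the composite $\psi \circ \phi: E \to E''$ is again an isogeny. A composition of morphisms is a morphism, a composition of surjective maps is surjective, and $(\psi \circ \phi)(\infty_E) = \psi(\phi(\infty_E)) = \psi(\infty_{E'}) = \infty_{E''}$, so all three conditions in the definition of an isogeny are met. (One should note that surjectivity is what rules out the degenerate constant maps here, so the composite is genuinely an isogeny rather than merely a morphism.)

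Symmetry is the one property that is not purely formal, and it is where the main content lies. Given an isogeny $\phi: E \to E'$, I would simply invoke the dual isogeny theorem stated above, which guarantees a dual $\hat{\phi}: E' \to E$ that is itself an isogeny. This immediately yields that $E'$ is isogenous to $E$ whenever $E$ is isogenous to $E'$.

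I expect this symmetry step to be the crux: reflexivity and transitivity follow from elementary properties of morphisms, whereas symmetry is precisely the nontrivial assertion encapsulated in the existence of the dual. In this sense the corollary is an unpacking of that theorem together with two routine verifications, and no further machinery should be required.
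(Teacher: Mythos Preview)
Your proposal is correct and follows essentially the same approach as the paper: both verify reflexivity via the identity map, symmetry by invoking the dual isogeny theorem, and transitivity by composing isogenies (using surjectivity to ensure the composite is again an isogeny). Your write-up is slightly more detailed in checking the $\infty$-condition under composition, but the structure and content match the paper's proof.
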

    
    \begin{proof}
        Isogenies satisfy the required properties:
        \begin{enumerate}
            \item Reflexive: Every elliptic curve is isogenous to itself via the identity map.
            \item Symmetric: The existence of dual isogenies imply that if $E$ is isogenous to $E'$, then $E'$ is isogenous to $E$.
            \item Transitive: Since isogenies are surjective, they can be composed to create new isogenies.
        \end{enumerate}
    \end{proof}
    
    \begin{theorem}[Sato-Tate's Isogeny Theorem (1966)] \label{thm: tate}
    Two elliptic curves $E_1/K$ and $E_2/K$ are isogeneous if and only if 
    \[\# E_1(K) = \# E_2(K). \]
    \end{theorem}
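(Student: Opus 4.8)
The plan is to treat $K=\Fq$ a finite field, where the statement is a genuine equivalence, and to prove the two implications by quite different means. Throughout, let $\pi_i$ denote the $q$-power Frobenius endomorphism of $E_i$ and $t_i=\tr(\pi_i)$ its trace, so that Hasse's theorem reads $\#E_i(\Fq)=q+1-t_i$ and $\pi_i$ satisfies $\pi_i^2-t_i\pi_i+q=0$ in $\End(E_i)$.

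For the forward implication, suppose $\phi\colon E_1\to E_2$ is an isogeny over $\Fq$. Since $\phi$ is given by rational functions with coefficients in $\Fq$, it commutes with the $q$-power map on coordinates, giving the intertwining relation $\phi\circ\pi_1=\pi_2\circ\phi$; by induction $\phi\circ P(\pi_1)=P(\pi_2)\circ\phi$ for every $P\in\Z[x]$. Applying this to $P(x)=x^2-t_1x+q$ and using $P(\pi_1)=0$ yields $P(\pi_2)\circ\phi=0$. As $\phi$ is surjective, $P(\pi_2)$ annihilates every point of $E_2$, so $P(\pi_2)=0$; comparing with $\pi_2^2-t_2\pi_2+q=0$ gives $(t_1-t_2)\pi_2=0$, and since $\pi_2\neq 0$ and $\End(E_2)$ is torsion-free we conclude $t_1=t_2$, hence $\#E_1(\Fq)=\#E_2(\Fq)$. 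This half is elementary and self-contained given the Frobenius formalism.

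The reverse implication is considerably deeper. Equal cardinality forces $t_1=t_2$, so $\pi_1$ and $\pi_2$ have the same characteristic polynomial; the task is to upgrade this numerical coincidence into an actual isogeny. The standard route is Tate's theorem on homomorphisms of abelian varieties over finite fields: for any prime $\ell\neq p$, the natural map
$$\operatorname{Hom}(E_1,E_2)\otimes\Z_\ell\;\longrightarrow\;\operatorname{Hom}_{\Gal(\overline{\Fq}/\Fq)}\!\big(T_\ell E_1,\,T_\ell E_2\big)$$
on $\ell$-adic Tate modules is an isomorphism. Because the Galois action on each $T_\ell E_i$ is governed by $\pi_i$ and the two Frobenius elements share a characteristic polynomial, one can exhibit a nonzero Galois-equivariant homomorphism $T_\ell E_1\to T_\ell E_2$, which the theorem then realizes as a nonzero element of $\operatorname{Hom}(E_1,E_2)$, i.e. an isogeny.

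The main obstacle is exactly this reverse direction: it rests on Tate's 1966 theorem, whose proof demands the machinery of Tate modules, $\ell$-adic representations, and a semisimplicity argument for the Frobenius action that lies well beyond the elementary group-law techniques available so far. Accordingly I would prove the forward direction by the direct Frobenius computation above and cite \cite{Tate} for the reverse direction rather than reconstruct Tate's argument.
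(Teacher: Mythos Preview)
Your argument is correct, and in fact it goes beyond what the paper does: the paper states this theorem as background without any proof, relying solely on the attribution to Tate and the citation \cite{Tate}. There is no proof block in the paper to compare against.

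Your forward direction via the intertwining relation $\phi\circ\pi_1=\pi_2\circ\phi$ and the shared characteristic polynomial is clean and self-contained; the step $(t_1-t_2)\pi_2=0\Rightarrow t_1=t_2$ is exactly the torsion-freeness of $\End(E_2)$ as a $\Z$-module (equivalently, $[n]$ is a nonzero isogeny for every $n\neq 0$), which you invoke correctly. Your decision to cite \cite{Tate} for the reverse implication is precisely what the paper does implicitly for the entire statement, and your explanation of why that direction is genuinely deep---requiring Tate's isomorphism $\operatorname{Hom}(E_1,E_2)\otimes\Z_\ell\cong\operatorname{Hom}_{\Gal}(T_\ell E_1,T_\ell E_2)$---is accurate and appropriate. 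So your proposal is both correct and strictly more informative than the paper's own treatment.
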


    As mentioned previously, isogenies defined over $K$ induce group homomorphisms between the elliptic curves' $K$-rational points. The following theorem shows that invertible isogenies over $K$ induce isomorphisms of $K$-rational points.

    \begin{theorem} \label{thm: iso E/K to E(K)}
        If $E_1$ and $E_2$ are elliptic curves isomorphic over $K$, then $E_1(K)\cong E_2(K)$.
    \end{theorem}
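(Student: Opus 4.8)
The plan is to deduce the group isomorphism directly from the two mutually inverse isogenies that witness the isomorphism of curves, simply by restricting every map in sight to $K$-rational points.

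First I would unpack the hypothesis. Since $E_1$ and $E_2$ are isomorphic over $K$, the definition supplies isogenies $\phi : E_1 \to E_2$ and $\psi : E_2 \to E_1$, both defined over $K$, satisfying $\psi \circ \phi = [1]_{E_1}$ and $\phi \circ \psi = [1]_{E_2}$. Next I would invoke the fact recorded just before the statement: an isogeny defined over $K$ restricts to a group homomorphism on $K$-rational points. Applying this to $\phi$ and $\psi$ produces group homomorphisms $\phi|_{E_1(K)} : E_1(K) \to E_2(K)$ and $\psi|_{E_2(K)} : E_2(K) \to E_1(K)$. The substantive content here is that, because the defining rational functions of $\phi$ have coefficients in $K$, they send a point with coordinates in $K$ to a point with coordinates in $K$; thus $\phi$ genuinely maps $E_1(K)$ into $E_2(K)$, not merely into $E_2(\overline{K})$.

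Finally I would check that these restricted homomorphisms are mutually inverse. Restriction commutes with composition, and the identity map of curves $[1]_{E_i}$ restricts to the identity map on $E_i(K)$; hence $\psi|_{E_2(K)} \circ \phi|_{E_1(K)} = \mathrm{id}_{E_1(K)}$ and $\phi|_{E_1(K)} \circ \psi|_{E_2(K)} = \mathrm{id}_{E_2(K)}$. Consequently $\phi|_{E_1(K)}$ is a bijective group homomorphism whose inverse is $\psi|_{E_2(K)}$, yielding the desired group isomorphism $E_1(K) \cong E_2(K)$.

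The argument is essentially formal once the restriction fact is granted; what I would flag as the main point — rather than a genuine obstacle — is the claim that a $K$-defined isogeny preserves $K$-rationality of points. This is precisely what guarantees that the restrictions land in the correct groups, and it is the reason the hypothesis "isomorphic over $K$" (as opposed to over $\overline{K}$) is exactly the right assumption: an isomorphism defined only over $\overline{K}$ would not in general carry $E_1(K)$ into $E_2(K)$.
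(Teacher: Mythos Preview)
Your proof is correct and follows essentially the same approach as the paper: both arguments rest on the single substantive point that an isogeny defined over $K$ carries $K$-rational points to $K$-rational points, and then deduce the group isomorphism by restricting. The only cosmetic difference is in the final step: you directly verify that the restrictions of $\phi$ and $\psi$ are mutually inverse group homomorphisms, whereas the paper shows $\phi[E_1(K)] = E_2(K)$ via the double containment $\phi[E_1(K)] \leq E_2(K)$ and $E_2(K) = \phi[\phi^{-1}[E_2(K)]] \leq \phi[E_1(K)]$, and then appeals to the First Isomorphism Theorem. Your formulation is slightly cleaner, since it avoids the implicit step of checking that the kernel of the restriction is trivial, but the underlying idea is identical.
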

    
    \begin{proof}
    Let $\phi:E\to E'$ be an isomorphism over $K$. Then, since $E(K)\leq E(\overline{K})$, $\phi[E(\overline{K})]\leq E'(\overline{K})$. In particular, since $\phi$ takes $K$-rational points to $K$-rational points, as it is defined over $K$, we have 
    \begin{equation}  \phi[E(K)] \leq E'(K)
    \label{eqn: iso for}
    \end{equation}
    This same argument holds for the inverse, $\phi^{-1}:E'\to E$. So,
    \begin{equation*}\phi^{-1}[E'(K)] \leq E(K).\end{equation*}
    Now, applying $\phi$ to the inequality above, we have
    
    \begin{equation}E'(K) \leq \phi[E(K)].
    \label{eqn: iso rev}
    \end{equation}
    Thus, by (\ref{eqn: iso for}) and (\ref{eqn: iso rev}), $\phi[E(K)]$ and $E'(K)$ are subgroups of each other, and thus 
    $$\phi[E(K)] \cong E'(K).$$
    Finally, by the First Isomorphism Theorem,  $\phi$ is an isomorphism between $E(K)$ and $E'(K)$. 
    \end{proof}

    \par We will now give two tools to detect if isogenies are defined over $K$.
    \begin{lemma}
        Let $\sigma\in\Gal(\overline{K}/K)$ and $\phi:E\to E'$ be an isogeny. Then $\phi$ is defined over $K$ if and only if $\phi^\sigma = \phi$
        for all $\sigma$.
    \end{lemma}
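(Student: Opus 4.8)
The plan is to treat this as an instance of Galois descent applied to the coefficients in the defining rational functions of $\phi$. First I would pin down precisely what $\phi^\sigma$ means: writing $\phi$ in a representation with coefficients in $\overline{K}$, the isogeny $\phi^\sigma$ is obtained by letting $\sigma$ act on each of those coefficients. Equivalently, letting $\sigma$ act coordinatewise on $E(\overline{K})$ and $E'(\overline{K})$, one has $\phi^\sigma = \sigma \circ \phi \circ \sigma^{-1}$, so the hypothesis $\phi^\sigma = \phi$ says exactly that $\phi$ commutes with the Galois action. The governing principle throughout is the fundamental fact that an element of $\overline{K}$ lies in $K$ if and only if it is fixed by every $\sigma \in \Gal(\overline{K}/K)$; here I would note that the fixed field of $\Gal(\overline{K}/K)$ equals $K$ for the perfect (in particular, finite) fields of interest in this paper.

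The forward direction is immediate: if $\phi$ is defined over $K$, then by definition it admits a representation all of whose coefficients lie in $K$, and each such coefficient is fixed by every $\sigma \in \Gal(\overline{K}/K)$. Applying $\sigma$ to the coefficients therefore changes nothing, giving $\phi^\sigma = \phi$ for every $\sigma$. The content is in the converse. Since an isogeny is a morphism defined everywhere, I would invoke the canonical form from the earlier lemma, applied over $\overline{K}$, to write
$$\phi(x,y) = \left( \frac{r_1(x)}{s_1(x)}, \frac{r_2(x)}{s_2(x)} y \right)$$
with $r_i, s_i \in \overline{K}[x]$, and the goal is to deduce that, after a suitable choice of representatives, every coefficient of these polynomials lies in $K$.

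The main obstacle is that a rational function does not determine its numerator and denominator uniquely, so the equality $\phi^\sigma = \phi$ of maps does not by itself force equality of the individual coefficients. To circumvent this I would first normalize each ratio $r_i/s_i$ into a canonical form, for instance requiring $\gcd(r_i,s_i)=1$ and $s_i$ monic, so that the rational function has uniquely determined coefficients. Because $\sigma$ is a field automorphism fixing $K$, it preserves gcd's and monicity, hence carries this normalized representation of $\phi$ to the normalized representation of $\phi^\sigma$. The hypothesis $\phi^\sigma = \phi$ then becomes an equality of normalized rational functions, which by uniqueness forces $\sigma(c) = c$ for each coefficient $c$. Since this holds for every $\sigma \in \Gal(\overline{K}/K)$, each coefficient lies in the fixed field $K$, and therefore $\phi$ is defined over $K$, completing the converse.
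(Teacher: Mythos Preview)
The paper does not actually prove this lemma; it is stated without proof as a standard preliminary fact in Section~\ref{SectionPrelim} and then invoked later (notably in the proof of Theorem~\ref{thm: ord iso}). Your proposal supplies the standard Galois descent argument that the paper omits, and it is essentially correct: the forward direction is immediate, and for the converse you correctly identify that the only issue is the non-uniqueness of numerator/denominator and resolve it by passing to a canonical normalized representative so that $\phi^\sigma=\phi$ forces coefficient-by-coefficient equality. One small point worth making explicit is that the equality $\phi^\sigma=\phi$ is a priori an equality of maps on $\overline{K}$-points, and you are using that $E$ is an irreducible variety over an infinite (algebraically closed) field so that equality on points implies equality of the underlying rational functions; this is routine but is the step that lets you compare coefficients at all.
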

    \begin{lemma}
        An isogeny $\phi:E\to E'$ is an isogeny is defined over $K$ if the image of $E(K)$ under $\phi$ is contained in $E'(K)$.
    \end{lemma}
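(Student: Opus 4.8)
The plan is to derive the statement from the Galois criterion of the previous lemma: since $\phi$ is defined over $K$ if and only if $\phi^\sigma=\phi$ for all $\sigma\in\Gal(\overline{K}/K)$, it suffices to verify this fixed-point condition under the hypothesis $\phi(E(K))\subseteq E'(K)$. I would first show that $\phi^\sigma$ and $\phi$ agree on every point of $E(K)$, and then confront the genuinely delicate step of promoting this pointwise agreement to an identity of morphisms on all of $E$.

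For the pointwise statement I would invoke the functoriality of the Galois action on points, $\phi^\sigma(P^\sigma)=\phi(P)^\sigma$, valid for all $P\in E(\overline{K})$. Fixing $P\in E(K)$ gives $P^\sigma=P$, while the hypothesis gives $\phi(P)\in E'(K)$ and hence $\phi(P)^\sigma=\phi(P)$; together these yield $\phi^\sigma(P)=\phi(P)$. Therefore the morphism $\psi_\sigma:=\phi^\sigma-\phi\colon E\to E'$ (well defined because $E$ and $E'$ are defined over $K$) satisfies $E(K)\subseteq\ker\psi_\sigma$. Over an infinite base field one could finish immediately, since $E(K)$ would be Zariski dense and morphisms agreeing on a dense set coincide; but in the finite-field setting of this paper $E(\F_q)$ is finite, so density is unavailable.

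I expect this last passage to be the main obstacle. In the finite-field case I would instead use $E(\F_q)=\ker(\pi-1)$ for the $q$-power Frobenius $\pi$, together with the equivalent description that $\phi$ is defined over $\F_q$ exactly when it commutes with Frobenius, $\pi_{E'}\circ\phi=\phi\circ\pi_E$. Since Galois conjugation preserves degree and $\deg$ is a positive-definite quadratic form on $\mathrm{Hom}(E,E')$, the triangle inequality gives $\deg\psi_\sigma\le(\sqrt{\deg\phi^\sigma}+\sqrt{\deg\phi})^2=4\deg\phi$, whence $\#E(\F_q)\le\#\ker\psi_\sigma\le\deg\psi_\sigma\le 4\deg\phi$. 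This forces $\psi_\sigma=0$, and thus the conclusion, precisely when $\#E(\F_q)>4\deg\phi$. Outside this regime care is genuinely required: for example $\phi=\alpha\circ(\pi-1)$ collapses $E(\F_q)$ to $\{\infty\}$ for any $\alpha$, satisfying the hypothesis vacuously while not always being defined over $\F_q$, which is exactly where the degree bound ceases to apply. Pinning down the hypotheses on $\deg\phi$ relative to $\#E(\F_q)$ that make the implication valid is the heart of the matter.
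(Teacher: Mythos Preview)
The paper states this lemma without proof, so there is no argument to compare your proposal against. Your analysis, however, is correct and in fact sharper than the paper: you have correctly identified that the lemma as written is \emph{false} over finite fields.

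Your counterexample can be made concrete within the paper's own setting. Take $p\equiv 3\pmod 4$ and $E/\F_p$ with $j(E)=1728$, and let $\iota\colon(x,y)\mapsto(-x,\sqrt{-1}\,y)$ be the order-$4$ automorphism; since $\sqrt{-1}\notin\F_p$, $\iota$ is defined over $\F_{p^2}$ but not over $\F_p$. Then $\phi=\iota\circ(\pi-1)$ sends all of $E(\F_p)$ to $\infty$, so the hypothesis $\phi(E(\F_p))\subseteq E(\F_p)$ holds trivially, yet for the nontrivial $\sigma\in\Gal(\F_{p^2}/\F_p)$ one has $\phi^\sigma=(-\iota)\circ(\pi-1)=-\phi\neq\phi$, so $\phi$ is not defined over $\F_p$. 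This refutes the lemma as stated.

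Your conditional argument under the extra hypothesis $\#E(\F_q)>4\deg\phi$ is also sound: if $\psi_\sigma:=\phi^\sigma-\phi$ were nonzero, then $\#E(\F_q)\le\#\ker\psi_\sigma\le\deg\psi_\sigma\le\bigl(\sqrt{\deg\phi^\sigma}+\sqrt{\deg\phi}\bigr)^2=4\deg\phi$, using that $\deg$ is a positive-definite quadratic form on $\mathrm{Hom}(E,E')$ and is preserved by Galois conjugation. So the implication does hold in that regime, and your diagnosis that some such bound is essential is exactly right. The paper's subsequent use of this lemma (to conclude that an endomorphism commuting with Frobenius is defined over $\F_q$) is in fact a standard fact, but it is usually proved directly from the commutation relation rather than via this lemma.
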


\subsection{Endomorphisms}\label{endomorphism}
    An endomorphism of an elliptic curve $E$ is an isogeny from $E$ to itself. The set $\End_{K}(E)$ of endomorphisms over $K$ has a ring structure with the following operations. Let $E/K$ be an elliptic curve and let $\alpha,\beta \in \Endo_K(E)$. Then, $\alpha+\beta$ will be the pointwise addition of functions. So for $P\in E/K$, $(\alpha+\beta)(P)=\alpha(P)+\beta(P)$. Multiplication will be given by function composition, so $\alpha\cdot\beta = \alpha\circ\beta$. The structure is exactly the same for the ring $\End_{\overline{K}}(E)$ of endomorphisms defined over $\overline{K}$.
    \par
        An example of an endomorphism is the \textbf{multiplication by }$n$\textbf{ map}, denoted by $[n]$ where $n$ is a nonzero integer. For an elliptic curve $E/K$ and a point $P\in E/K$, the map is defined via
        $$[n]P = nP = \underbrace{P + \cdots + P}_\text{$n$ times}$$
        Note that $[0]$ maps everything to zero, so it is constant, and not an isogeny by definition. This map has some noteworthy properties:
        \begin{itemize}
            \item $[n+m]=[n]+[m]$ as $(n+m)P=nP+mP$
            \item $[nm]=[n]\circ[m]$ as $(nm)P=n(mP)$.
        \end{itemize}
        With the multiplication by $n$ map defined, we will also introduce the definition of torsion points here. Given an elliptic curve $E/K$, $E[n]:=\{P\in \overline{K} \mid  nP=\infty\}$, that is $E[n]$ is the set of point that goes to infinity under the multiplication by $n$ map. We also have the following theorem concerning the group structure of $E[n]$:
        
        \begin{theorem}[Theorem 3.2 in \cite{Wl}]\label{to prove former theorem}
            Let $E $ be an elliptic curve defined over $K$ and $n$ be a positive integer. If the characteristic of $K$ does not divide $n$ or is 0, then
            $$
            E[n]\cong \Z/n\Z \oplus \Z/n\Z.
            $$
            If the char($K$)=$p>0$ and $p|n$, then we can write $n=p^r n'$ with $p\nmid n'$. Then
            $$
            E[n] \cong \Z/n'\Z \oplus \Z/n'\Z \text{ or } \Z/n\Z\oplus \Z/n'\Z.
            $$
        \end{theorem}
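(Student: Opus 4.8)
The plan is to analyze $E[n]$ through the multiplication-by-$n$ endomorphism $[n]\colon E\to E$, whose kernel is exactly $E[n]$. The group structure will be recovered from two pieces of data: the cardinality $\#E[n]$, which I compute from the degree and separability of $[n]$, and the number of invariant factors, which I control by examining $\ell$-torsion for primes $\ell \mid n$. Throughout, the two governing facts are that $[n]$ has degree $n^2$ (from multiplicativity of the degree together with the dual-isogeny relation $\hat\phi\circ\phi=[\deg\phi]$, so that $\deg[n]=n^2\deg[1]=n^2$), and that the separable degree of an isogeny equals the cardinality of its kernel.

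First I treat the case $\chari(K)\nmid n$ (including $\chari(K)=0$). Here $[n]$ is separable: pulling back the invariant differential $\omega$ gives $[n]^*\omega = n\,\omega\neq 0$, and a nonzero action on $\omega$ characterizes separability. Since $\#\ker\phi=\deg_s\phi$ for any isogeny and $[n]$ is separable of degree $n^2$, I get $\#E[n]=n^2$. To identify the group, write $E[n]\cong \Z/n_1\Z\oplus\cdots\oplus\Z/n_k\Z$ with $n_1\mid\cdots\mid n_k\mid n$ by the structure theorem. For each prime $\ell\mid n$ the subgroup $E[\ell]=E[n][\ell]$ is killed by $\ell$ and has order $\ell^2$ (again by separability), so it is a two-dimensional $\F_\ell$-vector space; since divisibility propagates up the chain $n_1\mid\cdots\mid n_k$, this pins the number of nontrivial invariant factors at exactly two. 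Then $n_1 n_2 = n^2$ with $n_1\mid n_2\mid n$ forces $n_1=n_2=n$, giving $E[n]\cong \Z/n\Z\oplus\Z/n\Z$.

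Next I treat $\chari(K)=p>0$ with $p\mid n$, writing $n=p^r n'$ and $p\nmid n'$. The primary decomposition of the finite abelian group $E[n]$ splits it as $E[n]\cong E[p^r]\oplus E[n']$, and the first case already gives $E[n']\cong \Z/n'\Z\oplus\Z/n'\Z$. Everything therefore reduces to computing $E[p^r]$. The key observation is that $[p]$ is inseparable, since $[p]^*\omega=p\,\omega=0$, yet $\deg[p]=p^2$; hence its separable degree $\deg_s[p]$ lies in $\{1,p\}$ but cannot equal $p^2$. By multiplicativity of the separable degree, $\deg_s[p^r]=(\deg_s[p])^r\in\{1,p^r\}$, so $\#E[p^r]\in\{1,p^r\}$. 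In the first subcase $E[p^r]$ is trivial; in the second, $E[p^r]$ is a $p$-group whose $p$-torsion $E[p^r][p]=E[p]$ has order $p$, hence a single cyclic factor, so $E[p^r]\cong\Z/p^r\Z$. Assembling, the supersingular subcase yields $E[n]\cong\Z/n'\Z\oplus\Z/n'\Z$, while the ordinary subcase yields $\Z/p^r\Z\oplus\Z/n'\Z\oplus\Z/n'\Z\cong\Z/n\Z\oplus\Z/n'\Z$, matching the claim.

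The main obstacle is the inseparable machinery underlying the second case: establishing that $\#\ker\phi=\deg_s\phi$, that separable and inseparable degrees are multiplicative, and that $[p]$ has separable degree $1$ or $p$ according to whether $E$ is supersingular or ordinary. These facts rest on factoring an isogeny into its separable and purely inseparable (Frobenius) parts and on computing degrees via division polynomials or the dual isogeny, and they are the genuine content separating the two structural outcomes. By contrast, the identification of invariant factors in the separable case is routine once the cardinalities $\#E[m]=m^2$ are in hand.
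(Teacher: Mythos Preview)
Your argument is correct and is precisely the standard textbook proof: compute $\#E[n]$ from the degree and (in)separability of $[n]$, then pin down the invariant factors via the $\ell$-torsion. Note, however, that the paper does not supply its own proof of this statement; it is quoted as Theorem~3.2 of Washington~\cite{Wl} and used as a black box to deduce Theorem~\ref{later}. Your sketch is essentially the argument Washington gives, so there is nothing to compare beyond observing that you have filled in what the paper merely cites.
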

        \noindent Now we are ready to give the proof of theorem \ref{later}.
        \begin{proof}[Proof of Theorem \ref{later}]
        From group theory, we know that a finite abelian group can be written into a direct sum of cyclic groups of the form 
        $$
        \Z/n_1\Z \oplus \Z/n_2\Z \oplus \cdots \oplus \Z/n_r\Z
        $$
        with $n_i|n_{i+1}$ for $i\geq 1$. For each $n_i$, $\Z/n_i\Z$ contains a one copy $\Z/n_1\Z$ and thus have $n_1$ elements of order dividing $n_1$. So in total we get $n_1^r$ elements in $E(\F_q)$ with order dividing $n_1$. But by Theorem \ref{to prove former theorem}, there can be at most $n_1^2$ such elements, so we get $r=2$, which proves the theorem. 
        \end{proof}
        
        Another important endomorphism is the \textbf{Frobenius endomorphism}. Let $E/\F_q$. Then, the Frobenius endomorphism is defined by
        \begin{align*}
            \pi_q:& E/\F_q\to E/\F_q\\ &(x,y)\mapsto (x^q,y^q)
        \end{align*}

        \begin{lemma}[Lemma 4.5 in \cite{Wl}]
            Let $E/\F_q$ be an elliptic curve, and let $(x,y)\in E(\closedFq)$. Then,
            \begin{enumerate}
                \item $\pi_q(x,y)\in E(\closedFq) $ and
                \item $(x,y)\in E(\F_q)$ if and only if $\pi_q(x,y)=(x,y)$.
            \end{enumerate}
        \end{lemma}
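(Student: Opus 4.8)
The plan is to reduce both statements to two elementary facts about finite fields: that the $q$-power map is a ring homomorphism on $\closedFq$ (since $q$ is a power of the characteristic $p$), and that $\F_q$ is exactly the fixed field of this map, i.e. $\F_q = \{t \in \closedFq : t^q = t\}$. The latter holds because every element of $\F_q$ is a root of $X^q - X$, and this polynomial has at most $q$ roots in $\closedFq$ while $\card{\F_q} = q$, so its root set is precisely $\F_q$. I would record both facts at the outset, noting in particular that the curve coefficients satisfy $A^q = A$ and $B^q = B$ because $A, B \in \F_q$.

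For part (1), I would simply verify the Weierstrass equation for the image point. Writing the hypothesis $(x,y)\in E(\closedFq)$ as $y^2 = x^3 + Ax + B$ and applying the $q$-power map to both sides, the homomorphism property lets it distribute over the sum and products on the right, giving
\[ (y^q)^2 = (y^2)^q = (x^3 + Ax + B)^q = (x^q)^3 + A^q x^q + B^q = (x^q)^3 + A x^q + B, \]
where the final equality uses $A^q = A$ and $B^q = B$. This is exactly the assertion that $\pi_q(x,y) = (x^q, y^q)$ satisfies the curve equation, and since $x^q, y^q \in \closedFq$, we conclude $\pi_q(x,y)\in E(\closedFq)$. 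The point at infinity is handled by the convention $\pi_q(\infty) = \infty$.

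For part (2), I would unwind each side into conditions on the coordinates. By definition $(x,y)\in E(\F_q)$ means $x,y\in\F_q$, while $\pi_q(x,y) = (x,y)$ means $x^q = x$ and $y^q = y$. The equivalence is then immediate from the fixed-field characterization recorded above: $x^q = x$ if and only if $x\in\F_q$, and likewise for $y$, so both conditions coincide. The point at infinity again satisfies both sides trivially.

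I do not anticipate a genuine obstacle; the one subtlety worth flagging is that the additivity $(a+b)^q = a^q + b^q$ of the $q$-power map relies on $q$ being a power of $p = \chari(K)$, via the iterated ``freshman's dream.'' It is precisely this single fact that makes the computation in part (1) go through, so I would state it explicitly rather than treat it as folklore.
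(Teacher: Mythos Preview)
Your argument is correct and is precisely the standard proof of this fact. Note, however, that the paper does not supply its own proof of this lemma: it is stated with a citation to Washington's textbook (Lemma~4.5 in \cite{Wl}) and left unproved, so there is nothing in the paper to compare your approach against. Your write-up is essentially the argument one finds in that reference.
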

        
        \begin{corollary}
            If an endomorphism $\phi\in\End_{\overline{\F_q}}(E)$ commutes with the Frobenius endomorphism $\pi_q$, then $\phi\in\End_{\F_q}(E)$.
        \end{corollary}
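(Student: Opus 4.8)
The plan is to verify the hypothesis of the lemma stating that an isogeny is defined over $\F_q$ as soon as the image of its $\F_q$-rational points lands back in the codomain's $\F_q$-rational points; here $\phi$ is an endomorphism, so this amounts to proving $\phi[E(\F_q)] \subseteq E(\F_q)$. The engine driving this inclusion is the preceding characterization (Lemma 4.5 in \cite{Wl}): a point $P \in E(\closedFq)$ lies in $E(\F_q)$ precisely when it is fixed by the Frobenius endomorphism, that is, $\pi_q(P) = P$.

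First I would fix an arbitrary $P \in E(\F_q)$ and, by that characterization, record $\pi_q(P) = P$. The entire argument then collapses to a one-line computation that exploits the commutativity hypothesis $\pi_q \circ \phi = \phi \circ \pi_q$:
$$\pi_q(\phi(P)) = (\phi \circ \pi_q)(P) = \phi(\pi_q(P)) = \phi(P).$$
Thus $\phi(P)$ is a fixed point of $\pi_q$, so it lies in $E(\F_q)$ by the same characterization. Since $P$ was arbitrary, $\phi[E(\F_q)] \subseteq E(\F_q)$, and the image lemma immediately yields $\phi \in \End_{\F_q}(E)$.

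The computation itself is trivial; the real content, and the step I would be most careful about, is the image lemma that converts ``$\phi$ preserves $\F_q$-rational points'' into ``$\phi$ is defined over $\F_q$.'' Because $E(\F_q)$ is a finite set, one cannot in general recover a map from its values on so few points, so the robust justification runs instead through the Galois characterization of isogenies defined over $K$. Concretely, let $\sigma \in \Gal(\closedFq/\F_q)$ be the $q$-power field automorphism, which topologically generates the Galois group, and write $\phi$ in canonical form $\phi(x,y) = \left(\frac{r_1(x)}{s_1(x)}, \frac{r_2(x)}{s_2(x)}y\right)$ with coefficients in $\closedFq$, so that $\phi^\sigma$ is obtained by raising those coefficients to the $q$-th power. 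A monomial-by-monomial check, using that $t \mapsto t^q$ is a ring homomorphism in characteristic $p$, will give the intertwining identity $\pi_q \circ \phi = \phi^\sigma \circ \pi_q$.

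Since $t \mapsto t^q$ is an automorphism of $\closedFq$, the Frobenius $\pi_q$ is a bijection on $E(\closedFq)$, so this identity rearranges to $\phi^\sigma = \pi_q \circ \phi \circ \pi_q^{-1}$. The commutativity hypothesis makes the right-hand side equal to $\phi$, whence $\phi^\sigma = \phi$; as $\sigma$ generates $\Gal(\closedFq/\F_q)$, I then get $\phi^\tau = \phi$ for every $\tau$, and the Galois lemma delivers $\phi \in \End_{\F_q}(E)$. The hard part will be establishing the intertwining identity $\pi_q \circ \phi = \phi^\sigma \circ \pi_q$ cleanly, since it is exactly the point where the two distinct roles of ``Frobenius'' — the geometric endomorphism $\pi_q$ and the arithmetic field automorphism $\sigma$ — must be reconciled.
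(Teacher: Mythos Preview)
Your proposal is correct and the main argument—showing $\phi$ preserves $E(\F_q)$ via the fixed-point characterization of $\F_q$-rational points under $\pi_q$, then invoking the image lemma—is exactly the paper's own proof. Your additional Galois-theoretic justification of that image lemma (via the intertwining identity $\pi_q \circ \phi = \phi^\sigma \circ \pi_q$ and the bijectivity of $\pi_q$) goes beyond what the paper supplies, and is a welcome strengthening since, as you rightly observe, knowing where an isogeny sends finitely many points does not by itself pin down its field of definition.
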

        \begin{proof}
            Suppose $\phi$ commutes with $\pi_q$. Let $P$ be a point in  $E(\F_q)$. Then,
            
            $$\pi_q\circ \phi(P)=\phi\circ \pi_q(P).$$
            
            Since $P\in E(\F_q)$, then $\pi(P)=P$ by Lemma 2.12. Hence,  $$(\phi(P))^q=\phi(P^q)=\phi(P).$$
            
            Again, by Lemma 2.12, this means that $\phi(P)\in E(\F_q)$. Therefore, $\phi$ restricted to $E(\F_q)$ maps into $E(\F_q)$, and is thus defined over $\F_q$ by Lemma 2.11.
        \end{proof}
    
\section{Ordinary Elliptic Curves with Equal J-Invariant}
In this section we will discuss the relationship between isogenous elliptic curves' $F_q$-rational points with equal  $j$-invariant. 

\begin{theorem}[\cite{Silverman}]
    If $E/\F_q$ is an ordinary elliptic curve, then $\End_{\overline{\F_q}}(E)$ is an order in an imaginary quadratic field, and hence commutative.
\end{theorem}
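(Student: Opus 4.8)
The plan is to find a concrete imaginary quadratic field sitting inside the endomorphism algebra $\End^0(E) := \EndoC \otimes_{\Z} \Q$ and then to show that this field is already everything. First I would recall the general structure of $\End^0(E)$. The dual isogeny provides an anti-involution $\phi \mapsto \hat{\phi}$ satisfying $\phi\hat{\phi} = [\deg\phi]$, and the degree extends to a positive-definite quadratic form on $\EndoC$; consequently $\EndoC$ has no zero divisors and $\End^0(E)$ is a finite-dimensional division algebra over $\Q$ equipped with a positive involution. The classification of such algebras leaves exactly three possibilities: $\Q$, an imaginary quadratic field, or a definite quaternion algebra over $\Q$, so $\dim_{\Q}\End^0(E) \in \{1,2,4\}$. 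It then remains to decide which case occurs when $E$ is ordinary.

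Next I would introduce the Frobenius $\pi_q \in \Endomo \subseteq \EndoC$, which satisfies $\pi_q^2 - t\pi_q + q = 0$ with $t = q+1-\#E(\Fq)$ the trace of Frobenius. Ordinariness means $\#E(\Fq) \not\equiv 1 \pmod p$, and since $q \equiv 0 \pmod p$ this is equivalent to $t \not\equiv 0 \pmod p$; in particular $t \neq 0$, so $\pi_q$ is not a rational integer and $\Q(\pi_q)$ is a genuine quadratic field inside $\End^0(E)$. Its discriminant $t^2 - 4q$ is $\le 0$ by Hasse's bound $|t| \le 2\sqrt{q}$, and the equality $t^2 = 4q = 4p^r$ would force $p \mid t$, contradicting $p \nmid t$; hence $t^2 - 4q < 0$ and $\Q(\pi_q)$ is imaginary quadratic. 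This already excludes $\End^0(E) = \Q$, leaving $\dim_{\Q}\End^0(E) \in \{2,4\}$.

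The heart of the argument, and the step I expect to be the main obstacle, is ruling out the quaternion case for ordinary $E$. Here I would use the splitting of $p$ in $\Q(\pi_q)$: the minimal polynomial $x^2 - tx + q$ reduces mod $p$ to $x(x-t)$, whose roots $0$ and $t$ are distinct because $p \nmid t$, so by Hensel's lemma it factors over $\Q_p$ and $p$ splits in $\Q(\pi_q)$. On the other hand, the quaternion algebra that arises as an endomorphism algebra over $\closedFq$ is ramified exactly at $p$ and $\infty$, and a quadratic field embeds into such an algebra only at primes where it is non-split. Since $\Q(\pi_q) \subseteq \End^0(E)$ but $p$ splits in $\Q(\pi_q)$, the algebra $\End^0(E)$ cannot be that ramified quaternion algebra. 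The only surviving possibility is $\End^0(E) = \Q(\pi_q)$, so $\EndoC$ is an order in an imaginary quadratic field and in particular commutative. The two inputs I would have to justify most carefully are the positivity of the degree form (which drives the division-algebra classification) and the ramification of the supersingular quaternion algebra at $p$; an alternative to the latter is the direct observation that ordinariness gives $E[p] \cong \Z/p\Z$, and a nontrivial $p$-torsion point is incompatible with a quaternionic action on the $p$-adic Tate module.
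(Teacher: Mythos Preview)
The paper does not actually prove this theorem: it is stated with a citation to Silverman and used as a black box. Your proposal therefore supplies a proof where the paper offers none, and your outline is essentially the classical Deuring argument that Silverman gives, so in that sense you are aligned with the cited source rather than diverging from it.

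Two small points are worth tightening. First, the implication ``$t\neq 0$, so $\pi_q$ is not a rational integer'' is not valid as written: $\pi_q\in\Z$ is equivalent to $t^2=4q$, not to $t=0$. You immediately repair this by showing $t^2-4q<0$, so the conclusion stands, but the intermediate sentence should be rephrased. Second, the step where you invoke that the quaternion endomorphism algebra is ramified at $p$ is the one place where circularity threatens, since that fact is often \emph{derived} from the supersingular classification. A clean independent justification is: for every prime $\ell\neq p$ the faithful action of $\End^0(E)\otimes\Q_\ell$ on $V_\ell E\cong\Q_\ell^2$ forces $\End^0(E)\otimes\Q_\ell\hookrightarrow M_2(\Q_\ell)$, so the quaternion algebra is split at every $\ell\neq p$; it is non-split at $\infty$ because the reduced norm (the degree form) is positive definite; and then the parity constraint on the ramification set forces ramification at $p$. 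With that filled in, your embedding criterion (a quadratic field sits in a quaternion algebra only if it is inert at each ramified place) finishes the argument exactly as you describe.
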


\begin{corollary}
    If $E/\F_q$ is ordinary, then $\End_{\overline{\F_q}}(E)=\End_{\F_q}(E)$.
\end{corollary}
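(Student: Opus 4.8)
The plan is to establish the two inclusions $\End_{\F_q}(E)\subseteq\End_{\overline{\F_q}}(E)$ and $\End_{\overline{\F_q}}(E)\subseteq\End_{\F_q}(E)$ separately; since the reverse inclusion is the one that fails in general (for supersingular curves), the ordinariness hypothesis must be invoked only there. The forward inclusion is immediate and requires no hypothesis: an endomorphism that is defined over $\F_q$ is, by definition, given by rational functions with coefficients in $\F_q\subseteq\overline{\F_q}$, hence is also an endomorphism over $\overline{\F_q}$. So the entire content of the corollary is the reverse inclusion, and this is where the preceding Theorem on the structure of $\End_{\overline{\F_q}}(E)$ enters.

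For the reverse inclusion I would take an arbitrary $\phi\in\End_{\overline{\F_q}}(E)$ and aim to show $\phi\in\End_{\F_q}(E)$ by producing a single element of $\End_{\overline{\F_q}}(E)$ with which $\phi$ commutes, namely the Frobenius endomorphism $\pi_q$. The key observations are that $\pi_q$ is itself an element of $\End_{\overline{\F_q}}(E)$ (indeed it is defined over $\F_q$, so it lies in $\End_{\F_q}(E)\subseteq\End_{\overline{\F_q}}(E)$ by the forward inclusion already established), and that by the Theorem immediately preceding this corollary the ring $\End_{\overline{\F_q}}(E)$ is an order in an imaginary quadratic field and therefore \emph{commutative}. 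Commutativity then forces $\phi\circ\pi_q=\pi_q\circ\phi$.

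Having shown that every $\phi\in\End_{\overline{\F_q}}(E)$ commutes with $\pi_q$, I would finish by invoking the earlier Corollary, which states precisely that an endomorphism $\phi\in\End_{\overline{\F_q}}(E)$ commuting with $\pi_q$ lies in $\End_{\F_q}(E)$. This yields $\End_{\overline{\F_q}}(E)\subseteq\End_{\F_q}(E)$, and combined with the trivial forward inclusion gives the desired equality $\End_{\overline{\F_q}}(E)=\End_{\F_q}(E)$.

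I do not expect a genuine obstacle here; the argument is a short deduction chaining together two results already available in the excerpt. The only point that deserves a moment's care is the justification that $\pi_q$ really does sit inside the ring $\End_{\overline{\F_q}}(E)$ so that the commutativity furnished by the Theorem applies to the pair $(\phi,\pi_q)$; once that is noted, the commutativity of an imaginary-quadratic order does all the work, and the previously proved Frobenius-commutation criterion converts that commutativity directly into the rationality conclusion $\phi\in\End_{\F_q}(E)$. The essential conceptual input is simply that ordinariness is exactly what guarantees commutativity, which is what collapses the two endomorphism rings into one.
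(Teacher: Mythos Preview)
Your proposal is correct and follows essentially the same approach as the paper: both argue the trivial inclusion $\End_{\F_q}(E)\subseteq\End_{\overline{\F_q}}(E)$, then for the reverse use commutativity of $\End_{\overline{\F_q}}(E)$ (from the preceding Theorem on ordinary curves) to force any $\phi$ to commute with $\pi_q$, and conclude via the Frobenius-commutation criterion that $\phi\in\End_{\F_q}(E)$. Your write-up is slightly more careful in noting explicitly that $\pi_q$ lies in $\End_{\overline{\F_q}}(E)$, but the argument is otherwise identical.
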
{}

\begin{proof}
    Let $\phi\in\End_{\overline{\F_q}}(E)$. Since $\End_{\overline{\F_q}}(E)$ is commutative, then $\phi$ commutes with $\pi_q$, hence $\phi\in\End_{\F_q}(E)$. Thus, $\End_{\overline{\F_q}}(E)\subseteq \End_{\F_q}(E)$. Trivially, $ \End_{\F_q}(E)\subseteq \End_{\overline{\F_q}}(E)$ since all isogenies defined over $\F_q$ are also defined over $\overline{\F_q}$. Thus,
    $$\End_{\overline{\F_q}}(E) = \End_{\F_q}(E)$$
\end{proof}{}

\begin{theorem} \label{thm: ord iso}
    Suppose $E_1$ and $E_2$ are ordinary elliptic curves defined over $\F_q$ with $j(E_1)=j(E_2)$. Then,
    $$\#E_1(\F_q)=\#E_2(\F_q) \iff E_1(\F_q)\cong E_2(\F_q)$$
\end{theorem}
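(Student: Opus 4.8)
The backward implication is immediate, since isomorphic finite groups have equal cardinality, so I would concentrate on the forward direction and exploit the complex-multiplication structure of the ordinary case. Since $j(E_1)=j(E_2)$, the isomorphism theorem for curves with equal $j$-invariant produces an isomorphism $E_1\to E_2$ defined over $\overline{\F_q}$. Conjugating endomorphisms by this map, and using the corollary that $\End_{\overline{\F_q}}(E_i)=\End_{\F_q}(E_i)$ for ordinary curves, I obtain a ring isomorphism $\theta\colon \mathcal{O}_1\to\mathcal{O}_2$ between the endomorphism rings $\mathcal{O}_i=\End_{\F_q}(E_i)$, each an order in an imaginary quadratic field. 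Writing $\pi_i\in\mathcal{O}_i$ for the Frobenius endomorphism of $E_i$, the key structural input I would use is the identification $E_i(\F_q)\cong \mathcal{O}_i/(\pi_i-1)\mathcal{O}_i$ as abelian groups, which holds precisely because $E_i$ is ordinary (so that $\mathcal{O}_i$ is commutative and acts faithfully).

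Granting this input, the argument is short. By Hasse's theorem $\#E_i(\F_q)=q+1-a_i$ with $a_i=\tr(\pi_i)$, and $\pi_i$ is a root of $x^2-a_ix+q$; the hypothesis $\#E_1(\F_q)=\#E_2(\F_q)$ forces $a_1=a_2=:a$. Then $\theta(\pi_1)$, being the image under a ring homomorphism fixing $\Z$ of a root of $x^2-ax+q$, is itself a root of $x^2-ax+q$ in $\mathcal{O}_2$; but this polynomial has exactly the two roots $\pi_2$ and its conjugate $\bar\pi_2=q/\pi_2$, so $\theta(\pi_1)\in\{\pi_2,\bar\pi_2\}$. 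Since complex conjugation, realized on endomorphisms by $\alpha\mapsto\hat\alpha$, is a ring automorphism of $\mathcal{O}_2$ fixing $\Z$ and interchanging $\pi_2$ and $\bar\pi_2$, I may post-compose $\theta$ with it if necessary to arrange $\theta(\pi_1)=\pi_2$. This $\theta$ then carries the ideal $(\pi_1-1)$ onto $(\pi_2-1)$ and descends to an isomorphism $\mathcal{O}_1/(\pi_1-1)\cong\mathcal{O}_2/(\pi_2-1)$, whence $E_1(\F_q)\cong E_2(\F_q)$.

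The hard part, and the only nontrivial ingredient, is the structural isomorphism $E(\F_q)\cong\mathcal{O}/(\pi-1)$: everything after it is elementary algebra of imaginary quadratic orders. I would either cite it (it is Lenstra's description of the complex-multiplication structure of elliptic curves over finite fields) or supply it directly, identifying $E(\F_q)=\ker(\pi-1)$ as an $\mathcal{O}$-module and using faithfulness of the CM action. I emphasize that merely checking the cardinality $N_{K/\Q}(\pi-1)=(\pi-1)(\bar\pi-1)=q+1-a=\#E(\F_q)$ is a consistency check, not a proof of the module isomorphism, so this is where genuine work is needed.

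As a fallback staying within the tools already developed, I could instead try to upgrade the $\overline{\F_q}$-isomorphism to one over $\F_q$ and then invoke Theorem \ref{thm: iso E/K to E(K)}. When $j\neq 0,1728$ this is clean: from $A_2=\mu^4A_1$ and $B_2=\mu^6B_1$ one gets $\mu^2=(B_2A_1)/(A_2B_1)\in\F_q$, and $\mu$ itself may be chosen in $\F_q$ unless $\mu^2$ is a nonsquare, in which case $E_2$ is the quadratic twist with $\#E_2=2(q+1)-\#E_1=q+1+a$; since the paper's own definition makes $a\neq 0$ for ordinary curves, this contradicts equal cardinality. The genuinely harder sub-case is $j\in\{0,1728\}$, where the quartic and sextic twists must be analyzed and one checks that, for ordinary curves in characteristic at least $5$, the several twists have pairwise distinct traces, so equal cardinality again forces an $\F_q$-isomorphism. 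This route is more elementary but case-heavy, which is why I would prefer the uniform complex-multiplication argument above.
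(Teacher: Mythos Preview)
Your argument is correct, but it takes a genuinely different route from the paper. You work at the level of endomorphism rings and invoke Lenstra's module description $E_i(\F_q)\cong\mathcal{O}_i/(\pi_i-1)$, then match the Frobenius elements under the ring isomorphism coming from equal $j$-invariant. The paper instead never leaves the geometric category: it uses Sato--Tate to produce an $\F_q$-isogeny $\lambda\colon E_1\to E_2$, composes the $\overline{\F_q}$-isomorphism $\phi\colon E_2\to E_1$ with $\lambda$ to obtain $\phi\circ\lambda\in\End_{\overline{\F_q}}(E_1)=\End_{\F_q}(E_1)$, and then runs a one-line Galois descent: for $\sigma\in\Gal(\overline{\F_q}/\F_q)$ one has $\phi^\sigma\circ\lambda=(\phi\circ\lambda)^\sigma=\phi\circ\lambda$, and surjectivity of $\lambda$ cancels to give $\phi^\sigma=\phi$, so $\phi$ is already defined over $\F_q$ and Theorem~\ref{thm: iso E/K to E(K)} finishes.

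The trade-off is clear. The paper's proof is entirely self-contained within the tools it has already set up (Sato--Tate, Corollary~3.2, and the Galois criterion for rationality), and it actually proves the stronger statement that $E_1$ and $E_2$ are isomorphic \emph{as curves} over $\F_q$, not just that their rational-point groups agree. Your approach imports a substantial external result (Lenstra's structure theorem) and only concludes the group isomorphism, but it has the virtue of making the CM structure explicit and would generalize more readily. Your fallback sketch via twist analysis is a correct third route, but as you note it is case-heavy at $j\in\{0,1728\}$; the paper's descent argument sidesteps exactly this by using the $\F_q$-isogeny $\lambda$ as a ``witness'' that forces the $\overline{\F_q}$-isomorphism down to $\F_q$ uniformly in $j$.
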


\begin{proof}
    We proceed as Proposition 14.19 in \cite{Cox}. Suppose $E_1(\F_q)\cong E_2(\F_q)$, then $\#E_1(\F_q)=\#E_2(\F_q)$, as isomorphisms of finite groups preserve order. Conversely, suppose $\#E_1(\F_q)=\#E_2(\F_q)$. By Sato-Tate, these curves are isogenous over $\F_q$, so there exists some isogeny $\lambda:E_1\to E_2$ defined over $\F_q$. Next, since $j(E_1)=j(E_2)$, there exists an isomorphism $\phi:E_2\to E_1$ defined over $\overline{\F_q}$. Consider $\phi\circ\lambda$ which is in $\End_{\overline{\F_q}}(E_1)$. Since $E_1$ is ordinary, this endomorphism can be defined over $\F_q$, so $\phi\circ\lambda\in\End_{\F_q}(E_1)$. Let $\sigma\in \Gal(\overline{\F_q}/ \F_q)$. Then,
    $$\phi^\sigma\circ \lambda = \phi^\sigma\circ\lambda^\sigma = (\phi\circ\lambda)^\sigma = \phi\circ\lambda$$
    Since isogenies are surjective, then $\phi^\sigma=\phi$. Since this holds for all such $\sigma$, then $\phi$ is defined over $\F_q$. Finally, since $E_1$ and $E_2$ are isomorphic over $\F_q$, then $E_1(\F_q)\cong E_2(\F_q)$ via Theorem 2.9. 
\end{proof}

\section{J-Invariant Zero Case}
We prove that when the $j$-invariant is zero, two curves are in the same isogeny class if and only if they have isomorphic $\F_q$-rational points. Since saying two curves are in the same isogeny class is equivalent to saying they have the same number of $\F_q$ rational points, we will prove that two curves have the same number of $\F_q$-rational points if and only if the group of $\F_q$-rational points are isomorphic.\vspace{5mm}

We show the conditions for which an elliptic curve with $j$-invariant zero is supersingular.

\begin{theorem}\label{order2mod3}
For $p\equiv 2\mod3$, if $E$ is an elliptic curve with $j$-invariant 0 defined over $F_q$, then:
\begin{enumerate}
    \item If $q$ is an odd power of $p$, then $\#E(\F_q)=q+1$;
    \item If $q$ is an even power of $p$, then $\#E(\F_q)=q+1+\{\pm 2\sqrt{q},\pm \sqrt{q}\}$.
\end{enumerate}
\end{theorem}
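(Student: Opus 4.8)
The plan is to split on the parity of $r$ (where $q=p^r$) and to dispose of the odd case first, since that computation will also hand us supersingularity for free. In all cases $j(E)=0$ forces $A=0$, so I write $E\colon y^2=x^3+B$ and count points through the quadratic character $\chi$ of $\F_q$ (with $\chi(0)=0$), using that $y^2=c$ has $1+\chi(c)$ solutions; this gives $\#E(\F_q)=q+1+\sum_{x\in\F_q}\chi(x^3+B)$.

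For the odd case I observe that $p\equiv 2\pmod 3$ and $r$ odd give $q\equiv 2\pmod 3$, hence $3\nmid q-1$, so $x\mapsto x^3$ is a bijection of $\F_q$. Substituting $u=x^3$ collapses the sum to $\sum_{u\in\F_q}\chi(u+B)=\sum_{v\in\F_q}\chi(v)=0$, yielding $\#E(\F_q)=q+1$ and proving (1). Specializing to $r=1$ shows every curve $y^2=x^3+B$ over $\F_p$ has $p+1$ points, so $j=0$ is a supersingular $j$-invariant; since supersingularity depends only on the $j$-invariant, our $E$ is supersingular over every $\F_q$, which I will use below.

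For the even case I pass to the Frobenius. Now $q\equiv 1\pmod 3$, so $\F_q$ contains a primitive cube root of unity $\omega$, and $\rho\colon(x,y)\mapsto(\omega x,y)$ is an automorphism of $E$ defined over $\F_q$ satisfying $\rho^2+\rho+1=0$, so $\Z[\rho]\cong\Z[\omega]$. Because $\rho$ is $\F_q$-rational it commutes with the $q$-power Frobenius $\pi_q$, and because $E$ is supersingular, $\End_{\overline{\F_q}}(E)\otimes\Q$ is a quaternion algebra in which the centralizer of the quadratic subfield $\Q(\omega)=\Q(\rho)$ is $\Q(\omega)$ itself. Hence $\pi_q\in\Q(\omega)$, and being an algebraic integer it lies in $\Z[\omega]$ with $\pi_q\bar\pi_q=q$. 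Since $p\equiv 2\pmod 3$ makes $p$ inert in $\Z[\omega]$ and $N(\pi_q)=p^r$, the only prime of $p$-power norm forces $\pi_q=\sqrt q\,\zeta$ for a unit $\zeta\in\{\pm 1,\pm\omega,\pm\omega^2\}$. Running over these six units, $t=\pi_q+\bar\pi_q=\sqrt q\,(\zeta+\bar\zeta)$ takes exactly the values $\{\pm 2\sqrt q,\pm\sqrt q\}$ (the value $0$ cannot occur because no sixth root of unity is purely imaginary), so $\#E(\F_q)=q+1-t\in q+1+\{\pm 2\sqrt q,\pm\sqrt q\}$, proving (2).

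I expect the even case to be the main obstacle, and within it the crucial point is forcing $\pi_q$ into $\Z[\omega]$ and then pinning it to $\sqrt q$ times a sixth root of unity. The two essential inputs are the centralizer computation in the supersingular quaternion endomorphism algebra (which relies on $\rho$ being $\F_q$-rational, hence commuting with $\pi_q$) and the inertness of $p$ in $\Z[\omega]$, which is precisely where the hypothesis $p\equiv 2\pmod 3$ re-enters. The delicate step worth verifying carefully is that $\pi_q$ is genuinely divisible by $\sqrt q$ in $\Z[\omega]$; inertness of $p$ settles this, since the only prime of $\Z[\omega]$ with $p$-power norm is $p$ itself, so the norm-$q$ factorization of $\pi_q$ must be $p^{r/2}$ up to a unit.
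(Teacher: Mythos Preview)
The paper does not actually prove this theorem; it is stated without proof or citation (the analogous $j=1728$ result, Theorem~\ref{thm: 1728 order}, is simply referred to \cite{Kim}). So there is no paper proof to compare against, but your argument is correct and complete.

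Your treatment of (1) via the quadratic character and the bijectivity of $x\mapsto x^3$ on $\F_q$ when $q\equiv 2\pmod 3$ is the standard elementary computation and is carried out cleanly. For (2), your approach is more structural than a direct character-sum or Jacobi-sum count: the order-$3$ automorphism $\rho\colon(x,y)\mapsto(\omega x,y)$, being $\F_q$-rational (since $r$ even gives $q\equiv 1\pmod 3$), commutes with $\pi_q$; because $\Q(\rho)\cong\Q(\omega)$ is a maximal subfield of the supersingular quaternion endomorphism algebra and hence its own centralizer, you correctly force $\pi_q\in\Z[\omega]$. Inertness of $p$ in $\Z[\omega]$---which is precisely the hypothesis $p\equiv 2\pmod 3$---then makes $\pi_q=\sqrt{q}\,\zeta$ with $\zeta$ a unit, and the six possible traces collapse to $\{\pm 2\sqrt q,\pm\sqrt q\}$ since, as you note, no sixth root of unity is purely imaginary. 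Each step is sound; the points you flag as delicate (the centralizer computation and the divisibility of $\pi_q$ by $\sqrt q$) are handled correctly.
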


\begin{theorem}[Gauss] \label{thm: gauss}
When $p\equiv 1 \mod 3$, if $E:y^2=x^3+r$ is an elliptic curve defined over $\F_p$, then 
$$
\#E(\F_p)=\begin{cases}
p+1+2a & \text{if } r \text{ is sextic residue modulo } p\\
p+1-2a & \text{if } r \text{ is cubic residue but not quadratic residue modulo } p\\
p+1-a\pm3b & \text{if } r \text{ is quadratic residue but not cubic residue modulo } p\\
p+1+a\pm 3b & \text{if } r \text{ is neither quadratic nor cubic residue modulo } p,
\end{cases}
$$
where $a,b$ satisfy the conditions $a^2+3b^2=p$, $b>0$, and $a\equiv2\mod 3$.
\end{theorem}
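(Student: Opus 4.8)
The plan is to realize the point count as a character sum over $\F_p$ and to evaluate it through the Jacobi sums attached to the quadratic and cubic characters, exploiting that $p\equiv 1\bmod 3$ guarantees a nontrivial cubic character. Writing $\rho=\jacsym{\cdot}{p}$ for the quadratic (Legendre) character, extended by $\rho(0)=0$, the equation $y^2=x^3+r$ has exactly $1+\rho(x^3+r)$ solutions $y$ for each fixed $x$, so that counting affine points and adding the point at infinity gives
$$\#E(\F_p)=p+1+\sum_{x\in\F_p}\rho(x^3+r).$$
The whole problem is thus to evaluate the sum $S:=\sum_{x\in\F_p}\rho(x^3+r)$.

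Next I would fix a cubic character $\chi$ on $\F_p^\times$ (which exists precisely because $3\mid p-1$) and use $\#\{x:x^3=u\}=1+\chi(u)+\overline{\chi}(u)$ for $u\neq 0$ together with $\sum_u\rho(u+r)=0$ to re-express the sum over cube values as
$$S=\sum_{u\in\F_p}\chi(u)\,\rho(u+r)+\sum_{u\in\F_p}\overline{\chi}(u)\,\rho(u+r).$$
The substitution $u=-rw$, using $\chi(-1)=1$ and $\rho(u+r)=\rho(r)\rho(1-w)$, collapses each sum into a Jacobi sum $J(\chi,\rho)=\sum_w\chi(w)\rho(1-w)$, yielding
$$S=\rho(r)\,\chi(r)\,J(\chi,\rho)+\rho(r)\,\overline{\chi}(r)\,\overline{J(\chi,\rho)}=2\rho(r)\,\mathrm{Re}\!\big(\chi(r)J(\chi,\rho)\big),$$
since $\rho$ is real-valued and hence $J(\overline{\chi},\rho)=\overline{J(\chi,\rho)}$.

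The third step interprets the Jacobi sum arithmetically. Because $\chi$, $\rho$, and $\chi\rho$ are all nontrivial (the last of order $6$), the standard evaluation gives $J(\chi,\rho)\in\Z[\omega]=\Z[\zeta_6]$ with $|J(\chi,\rho)|^2=p$, so $J(\chi,\rho)$ is an Eisenstein integer of norm $p$; written in the form $(a+b)+2b\omega$ it recovers the representation $p=a^2+3b^2$. As $\chi(r)$ runs over $\{1,\omega,\omega^2\}$ and $\rho(r)$ over $\{\pm 1\}$, the quantity $2\rho(r)\,\mathrm{Re}(\chi(r)J(\chi,\rho))$ runs over the traces of the six unit multiples of $J(\chi,\rho)$, namely $\pm 2a$ and $\pm a\pm 3b$. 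Matching $\chi(r)=1$ with $r$ being a cubic residue, $\rho(r)=1$ with $r$ being a quadratic residue (so a sextic residue means both), and reading off $\#E(\F_p)=p+1+S$ in each of the six combinations produces exactly the four displayed cases.

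The main obstacle is the sign and normalization bookkeeping: the steps above determine $\#E(\F_p)$ only up to which associate of $J(\chi,\rho)$ is the true Jacobi sum and which residue class is paired with the trace $2a$ versus $-2a$. The clean resolution is to fix the primary normalization of $J(\chi,\rho)$, equivalently to pin down $a,b$ by the conditions $a\equiv 2\bmod 3$ and $b>0$ stated in the theorem, and to verify the relevant congruence (of the type $J(\chi,\chi)\equiv-1\bmod 3$ in $\Z[\omega]$) that forces the sextic-residue case to give trace $-2a$, hence $\#E(\F_p)=p+1+2a$. This delicate determination of signs is the arithmetic heart of Gauss's original theorem, and I would carry it out following the Jacobi-sum treatment in Ireland--Rosen.
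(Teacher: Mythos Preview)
The paper does not actually prove this theorem: it is stated as a classical result attributed to Gauss (Theorem~\ref{thm: gauss}) and immediately followed by a definition, with no intervening proof. The paper only \emph{uses} the statement (in Lemma~\ref{ordinary}) to bound the trace of Frobenius. So there is no ``paper's own proof'' to compare against; you are supplying an argument where the paper simply cites the result.

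Your Jacobi-sum approach is the standard one and is essentially correct in outline: the reduction of $\#E(\F_p)$ to $p+1+\sum_x\rho(x^3+r)$, the cubic-character expansion of $\#\{x:x^3=u\}$, the collapse to $2\rho(r)\,\mathrm{Re}(\chi(r)J(\chi,\rho))$, and the identification of $J(\chi,\rho)$ with an Eisenstein integer of norm $p$ are all right. You are also right that the only genuinely delicate point is the sign/normalization bookkeeping that pins down which residue class of $r$ corresponds to which of the six traces $\pm 2a,\ \pm a\pm 3b$; and the paper's bibliography does include Ireland--Rosen~\cite{IR}, where exactly this determination is carried out. One small slip to watch: in your last paragraph you say the sextic-residue case ``gives trace $-2a$, hence $\#E(\F_p)=p+1+2a$,'' which is internally inconsistent---if $S=-2a$ then $\#E=p+1-2a$. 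Be careful to keep the sign of $S$, the normalization $a\equiv 2\bmod 3$, and the final displayed formula aligned when you write up the details.
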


\begin{definition}
The \textbf{trace of Frobenius} of an elliptic curve $E/\F_q$, usually denoted as $m$, is defined as $m=q+1-\#E(\F_q)$. 
\end{definition}

\begin{lemma}\label{ordinary}
An elliptic curve $E: y^2=x^3+r$ defined over $\F_p$ is ordinary if $p\equiv 1 \mod 3$. 
\end{lemma}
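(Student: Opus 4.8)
The plan is to reduce supersingularity over the prime field to the vanishing of the trace of Frobenius, and then to rule out that vanishing using the arithmetic constraint $a\equiv 2\bmod 3$ in Gauss's theorem. Write $m=p+1-\#E(\F_p)$ for the trace, as in the preceding definition. By the definition of supersingularity, $E$ is supersingular iff $\#E(\F_p)\equiv 1\bmod p$, i.e. iff $p\mid m$. Hasse's theorem gives $|m|\le 2\sqrt{p}$, and since every prime $p\equiv 1\bmod 3$ satisfies $p\ge 7>4$ we have $2\sqrt{p}<p$; hence the divisibility $p\mid m$ can occur only when $m=0$. Thus it suffices to show $m\neq 0$, and the whole problem collapses to proving $\#E(\F_p)\neq p+1$.

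To analyze $m$, I would invoke Gauss's theorem (Theorem \ref{thm: gauss}), which applies directly to $E:y^2=x^3+r$ over $\F_p$ when $p\equiv 1\bmod 3$. Substituting each of the four residue cases for $r$ into $m=p+1-\#E(\F_p)$ shows that the trace is always one of $\pm 2a$ or $\pm a\pm 3b$, where $a,b$ satisfy $a^2+3b^2=p$, $b>0$, and crucially $a\equiv 2\bmod 3$. For instance, when $r$ is a sextic residue one gets $m=-2a$, and when $r$ is a quadratic but not cubic residue one gets $m=a\mp 3b$; the remaining two cases are symmetric.

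The key step, and the only place where the hypothesis $p\equiv 1\bmod 3$ genuinely does work through the congruence on $a$, is to check that none of these candidate traces vanishes. If $m=\pm 2a=0$ then $a=0$; and if $m=\pm a\pm 3b=0$ then $a=\mp 3b$, so $3\mid a$. In either case $a\equiv 0\bmod 3$, contradicting $a\equiv 2\bmod 3$. Therefore $m\neq 0$ in every case, and combining this with the size bound from the first paragraph yields $0<|m|\le 2\sqrt{p}<p$, so $p\nmid m$ and $E$ is ordinary.

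I expect the main obstacle to be purely bookkeeping rather than conceptual: correctly converting each of the four residue-class formulas in Gauss's theorem into the corresponding value of $m$ (watching the signs of the $\pm 3b$ terms), and confirming that the elementary inequality $2\sqrt{p}<p$ already holds at the smallest relevant prime $p=7$, so that no separate small-prime check is needed. Once the constraint $a\equiv 2\bmod 3$ is in hand, the nonvanishing of $m$ is immediate.
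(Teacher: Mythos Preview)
Your proof is correct and follows the same overall strategy as the paper: use Gauss's theorem to list the possible traces $m\in\{\pm 2a,\ \pm a\pm 3b\}$ and then argue that $p\nmid m$. The one technical difference is how the bound $|m|<p$ is obtained. You invoke Hasse's inequality $|m|\le 2\sqrt{p}$ together with $2\sqrt{p}<p$ for $p\ge 7$, whereas the paper instead uses the relation $a^2+3b^2=p$ directly, writing $|2a|\le a^2=p-3b^2<p$ and $|\pm a\pm 3b|\le |a|+3|b|<a^2+3b^2=p$. Your route is arguably cleaner: the paper's inequality $|2a|\le a^2$ actually fails when $a=-1$ (e.g.\ $p=13$, $a=-1$, $b=2$), so its chain of inequalities is not literally valid in all cases even though the conclusion $|m|<p$ still holds. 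You are also more explicit than the paper about why $m\neq 0$: the paper's displayed inequalities only yield $|m|<p$, and the exclusion of $m=0$ via the congruence $a\equiv 2\pmod 3$ (ruling out $a=0$ and $3\mid a$) is left implicit there, while you spell it out.
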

\begin{proof}
Here we consider the relationship between $p$ and the trace of Frobenius, $m=p+1-\#E(\F_p)$. By Theorem \ref{thm: gauss}, when $p\equiv 1 \mod 3$, we have $m\in\{ \pm2a,-a\pm3b,a\pm3b\}$ where $a\equiv 2\mod 3$ and $b>0$. These conditions give us the following inequalities:
\begin{align*}
    |2a| &\leq a^2 = p-3b^2 < p,\\
    |-a\pm3b| &\leq |a|+|3b| < a^2+3b^2 = p,\\
    |a\pm3b| &\leq |a|+|3b| < a^2+3b^2 = p.
\end{align*}
Thus, $p$ does not divide any of the elements in $\{\pm2a,-a\pm3b,a\pm3b\}$.
Hence, $p$ does not divide the trace of Frobenius.
Therefore, when $p\equiv 1\mod 3$, $E$ is ordinary.
\end{proof}

\begin{lemma}\label{extension}
For an elliptic curve $E: y^2=x^3+r$ defined over $\F_q$ with $r\in \F_p^{\times}$, if $E$ is ordinary over $\F_p$, $E$ is also ordinary over $\F_q$.
\end{lemma}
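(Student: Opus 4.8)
The plan is to reduce supersingularity to a congruence on the trace of Frobenius and then track how that trace transforms under base extension. Write $q = p^n$. By the definition of supersingular together with the trace of Frobenius $m_n := q + 1 - \#E(\F_q)$, and since $p \mid p^n$, we have $\#E(\F_q) = p^n + 1 - m_n \equiv 1 - m_n \pmod p$. Hence $E$ is ordinary over $\F_q$ precisely when $p \nmid m_n$, and likewise $E$ is ordinary over $\F_p$ precisely when $p \nmid m_1$, where $m_1 = p + 1 - \#E(\F_p)$. So the hypothesis is $p \nmid m_1$ and the goal is $p \nmid m_n$.

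The central tool is the characteristic polynomial of the Frobenius endomorphism $\pi_p$ of $E/\F_p$, namely $X^2 - m_1 X + p$, whose roots $\alpha,\beta \in \C$ satisfy $\alpha + \beta = m_1$ and $\alpha\beta = p$. Because the Frobenius of $E$ over $\F_{p^n}$ is $\pi_p^{\,n}$, its eigenvalues are $\alpha^n, \beta^n$, and the point-count formula gives $m_n = \alpha^n + \beta^n$. First I would record the recurrence satisfied by these power sums: using $\alpha + \beta = m_1$ and $\alpha\beta = p$ one obtains
\[
m_n = m_1\, m_{n-1} - p\, m_{n-2}, \qquad m_0 = 2, \quad m_1 = \alpha + \beta.
\]

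Next I would reduce this recurrence modulo $p$. Since $p\, m_{n-2} \equiv 0 \pmod p$, it collapses to $m_n \equiv m_1\, m_{n-1} \pmod p$, and an immediate induction yields
\[
m_n \equiv m_1^{\,n} \pmod p.
\]
Now the hypothesis that $E$ is ordinary over $\F_p$, i.e.\ $p \nmid m_1$, together with the primality of $p$, forces $p \nmid m_1^{\,n}$, hence $p \nmid m_n$. By the first paragraph this says exactly that $E$ is ordinary over $\F_q$, completing the argument; the case $n = 1$ is the hypothesis itself.

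The step I expect to be the main obstacle is justifying the identity $m_n = \alpha^n + \beta^n$, that is, controlling the $\F_{p^n}$-trace of Frobenius in terms of the $\F_p$-trace. This rests on the facts that the $\F_{p^n}$-Frobenius is the $n$-th power of the $\F_p$-Frobenius and that $\#E(\F_{p^n}) = p^n + 1 - \alpha^n - \beta^n$; once these are in hand the rest is a routine mod-$p$ induction. Note that the explicit form $y^2 = x^3 + r$ and the hypothesis $r \in \F_p^\times$ are used only to guarantee that one and the same curve is defined over both $\F_p$ and $\F_q$, so that $E/\F_q$ is genuinely the base change of $E/\F_p$; the argument applies verbatim to any elliptic curve defined over $\F_p$.
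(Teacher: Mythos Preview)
Your argument is correct but proceeds differently from the paper. The paper's proof is a single line invoking the $p$-torsion characterization of ordinariness: since $E$ is ordinary over $\F_p$, one has $E[p]\cong\Z/p\Z$, and because $E[p]$ is computed over the algebraic closure $\overline{\F_p}=\overline{\F_q}$ it does not depend on which intermediate field one works over, so $E$ is ordinary over $\F_q$ as well. Your route instead stays with the paper's trace-based definition and tracks it explicitly through the base change via the recurrence $m_n = m_1 m_{n-1} - p\,m_{n-2}$, yielding $m_n\equiv m_1^{\,n}\pmod p$. What your approach buys is that it avoids appealing to the equivalence between the trace condition and the $p$-torsion condition (which the paper uses implicitly without having stated it); what the paper's approach buys is brevity and the conceptual point that supersingularity is a geometric property of $E/\overline{\F_p}$. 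You are also right that the specific Weierstrass form plays no role beyond ensuring the curve is defined over $\F_p$; the paper's proof likewise uses nothing about $y^2=x^3+r$.
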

\begin{proof}
Since $E$ is ordinary over $\F_p$, we have $E[p]\cong \Z/p\Z$. Since $\F_q$ also has characteristic $p$, $E[p]\cong \Z/p\Z$ implies $E$ is ordinary over $F_q$.
\end{proof}

\begin{theorem}\label{os}
An elliptic curve $E$ defined over $\F_q$ with $j(E)=0$ is supersingular if and only if $p\equiv 2\mod 3$.
\end{theorem}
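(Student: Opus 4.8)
The plan is to treat the two directions separately, using the point-count results already established together with the fact that whether a curve is ordinary or supersingular is an isomorphism invariant over $\overline{\F}_p$. First, since $j(E)=0$ forces $A=0$, I may assume $E$ has the form $E:y^2=x^3+B$ with $B\in\F_q^\times$, and recall that by definition $E$ is supersingular precisely when $\#E(\F_q)\equiv 1\pmod p$, equivalently when $p$ divides the trace $t=q+1-\#E(\F_q)$. Because $\chari(\F_q)=p\neq 3$, every $p$ satisfies $p\equiv 1$ or $p\equiv 2\pmod 3$, so these two residue classes exhaust all cases.

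For the implication $p\equiv 2\pmod 3\Rightarrow E$ supersingular, I would invoke Theorem~\ref{order2mod3} directly, as it applies to every curve with $j$-invariant $0$ over $\F_q$. If $q$ is an odd power of $p$ then $\#E(\F_q)=q+1\equiv 1\pmod p$ immediately. If $q$ is an even power of $p$, write $q=p^r$ with $r$ even; then $\sqrt q=p^{r/2}$ with $r/2\ge 1$, so each possible correction term $\pm 2\sqrt q,\pm\sqrt q$ is a multiple of $\sqrt q=p^{r/2}$ and hence divisible by $p$, whence $\#E(\F_q)\equiv q+1\equiv 1\pmod p$. In either case $E$ is supersingular.

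For the converse I would argue the contrapositive: assuming $p\equiv 1\pmod 3$, I want to show $E$ is ordinary. The key observation is that, via Theorem~\ref{to prove former theorem}, being ordinary is equivalent to $E[p]\cong\Z/p\Z$ and being supersingular to $E[p]$ being trivial; this is a property of the $p$-torsion over $\overline{\F}_p$ and is therefore preserved by any isomorphism of curves defined over $\overline{\F}_p$. Since every curve with $j$-invariant $0$ is isomorphic over $\overline{\F}_p$ to the fixed curve $E_0:y^2=x^3+1$ defined over $\F_p$ (by the isomorphism theorem for curves of equal $j$-invariant), it suffices to show that $E_0$ is ordinary. But $E_0$ has coefficient $1\in\F_p^\times$, so Lemma~\ref{ordinary} applies and gives that $E_0$ is ordinary over $\F_p$; hence $E[p]\cong E_0[p]\cong\Z/p\Z$ and $E$ is ordinary. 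Combining the two implications yields the stated equivalence.

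The step I expect to be the main obstacle is precisely this reduction in the converse. The hypotheses of Lemma~\ref{ordinary} and Lemma~\ref{extension} require the coefficient $B$ to lie in $\F_p^\times$, whereas the theorem allows an arbitrary $B\in\F_q^\times$ over an arbitrary ground field $\F_q$. Resolving this cleanly relies on the fact that supersingularity is a geometric invariant---determined by $E[p]$ over $\overline{\F}_p$, and hence by $j(E)$ and $p$ alone---so that one is free to replace $E/\F_q$ by the convenient representative $E_0/\F_p$. One must take slight care to confirm that the trace-based definition of supersingularity used here agrees with the $E[p]$-characterization, which is exactly the equivalence already invoked implicitly in the proof of Lemma~\ref{extension}.
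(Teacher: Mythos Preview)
Your forward direction ($p\equiv 2\pmod 3\Rightarrow$ supersingular) is identical to the paper's, invoking Theorem~\ref{order2mod3} and checking divisibility of the trace in the odd and even power cases.

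For the converse you take a genuinely different route. The paper argues arithmetically: given $E:y^2=x^3+r$ over $\F_q$ with $p\equiv 1\pmod 3$, it chooses $r'\in\F_p^\times$ lying in the same sextic residue class as $r$ in $\F_q^\times$, so that the twist $y^2=x^3+r'$ has the same trace over $\F_q$, and then applies Lemma~\ref{ordinary} to $r'$ over $\F_p$ followed by Lemma~\ref{extension} to pass to $\F_q$. You instead argue geometrically: supersingularity is read off from $E[p]$ over $\overline{\F}_p$ and is therefore an $\overline{\F}_p$-isomorphism invariant, so you may replace $E$ by the single model $E_0:y^2=x^3+1$ over $\F_p$ and apply Lemma~\ref{ordinary} once. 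Your approach is cleaner in that it bypasses both the sextic-residue reduction and Lemma~\ref{extension} entirely---indeed, your argument is essentially a stronger form of Lemma~\ref{extension} that works for arbitrary $B\in\F_q^\times$ rather than just $B\in\F_p^\times$. The paper's approach has the mild advantage of staying entirely within the trace-based definition and the explicit point-count theorems, never needing to invoke the equivalence with the $E[p]$-characterization (though, as you correctly observe, that equivalence is already used inside the proof of Lemma~\ref{extension}, so the paper is not really avoiding it).
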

\begin{proof}
If $p\equiv 2\mod 3$, by theorem \ref{order2mod3} 
$$
m=q+1-\#E(\F_q)=
\begin{cases}
0 & \text{if } q \text{ is an odd power of }p\\
\pm2\sqrt{q} \text{ or } \pm\sqrt{q} & \text{if } q \text{ is an even power of }p.
\end{cases}
$$
Thus when $q$ is odd power of $p$, $p\mid m=0$, $E$ is supersingular. When $q$ is even power of $p$, $\sqrt{q}$ will still be a power of $p$, so $E$ is also supersingular. \\
\indent Now suppose $p\not\equiv 2\mod 3$, $i.e.$ $p\equiv 1\mod 3$. For any elliptic curve $E:y^2=x^3+r$, there must exist an $r'\in \F_p^{\times}$ such that $r$ and $r'$ are in the same sextic class over $F_q$. So we know $y^2=x^3+r$ and $y^2=x^3+r'$ have the same number of rational points, and thus have the same trace of Frobenius, $i.e.$ $m_r=m_{r'}$. According to lemma \ref{ordinary} and lemma \ref{extension}, $p\nmid m_{r'}$ since $r'\in \F_p^{\times}$, thus $p\nmid m_r$ and $E$ is ordinary over $\F_q$. 
\end{proof}
\vspace{3mm}
Now since we know an elliptic curve with $j$-invariant 0 is supersingular if and only if $p\equiv 2\mod 3$ (thus is ordinary if and only if $p\equiv 1\mod 3$), we will discuss the relationship between order and group structure of $\F_q$-rational points with respect to $p$.

\subsection{Group Structure of Supersingular Curves}

\begin{theorem}[Theorem 2.1 in \cite{Vladut}]\label{major}
A finite abelian group $G$ of order $N=q+1-m$, with $m^2\leq 4q$, is isomorphic to $E(\F_q)$ for $E$ over $\F_q$ if and only if one of the following conditions holds:
\begin{enumerate}
\item $p$ does not divide $m$, and $G\cong \Z/A\times \Z/B$, where $B|A$ and $B|(m-2)$.
\item $q$ is an odd power of $p$ and one of the following holds:
\begin{enumerate}
\item $m=0$, $p\equiv 1,2\mod 4$, and $G$ is cyclic. \label{subthm: ord cyclic}
\item $m=0$, $p\equiv 3\mod 4$, and $G$ is either cyclic or $G\cong \Z/(q+1)/2 \times \Z/2$. \label{subthm: ss cyclic}
\item $p=2$ or 3, $m=\pm\sqrt{pq}$, and $G$ is cyclic.
\end{enumerate}

\item $q$ is an even power of $p$ and one of the following holds:
\begin{enumerate}
\item $m=\pm2\sqrt{q}$ and $G\cong (\Z/(\sqrt{q}\mp 1))^2$.
\item $m=\pm\sqrt{q}$, and $p=3$ or $p\equiv 2\mod 3$, and $G$ is cyclic.
\item $m=0$, and $p\equiv 2,3\mod4$, and $G$ is cyclic. 
\end{enumerate}
\end{enumerate}

\end{theorem}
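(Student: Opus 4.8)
The plan is to split the statement into its two implications — that every $E/\F_q$ yields a group of one of the listed shapes (necessity), and that every group on the list is realised by some curve (sufficiency) — and to treat the ordinary regime $p \nmid m$ separately from the three supersingular regimes. The unifying object throughout is the Frobenius endomorphism $\pi = \pi_q$, via the identification $E(\F_q) = \ker(\pi - 1)$ together with its characteristic polynomial $T^2 - mT + q$.

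For necessity, I would begin from Theorem \ref{later}, writing $E(\F_q) \cong \Z/n_1\Z \oplus \Z/n_2\Z$ with $n_1 \mid n_2$, and observe that $n_1$ is the largest $n$ with $E[n] \subseteq E(\F_q)$, i.e.\ with $\pi$ acting as the identity on $E[n]$. In the ordinary case $p \nmid m$ we have $E[p] \cong \Z/p\Z$ by Theorem \ref{to prove former theorem}, so $p \nmid n_1$ and two constraints drop out cleanly. First, since $E[n_1] \cong (\Z/n_1\Z)^2 \subseteq E(\F_q)$, the Weil pairing $e_{n_1}\colon E[n_1] \times E[n_1] \to \mu_{n_1}$, being nondegenerate, Galois-equivariant and satisfying $e_{n_1}(\pi P, \pi Q) = e_{n_1}(P,Q)^q$, forces $\mu_{n_1} \subseteq \F_q$, hence $n_1 \mid (q-1)$. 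Second, reducing the characteristic polynomial of $\pi$ modulo $n_1$ gives $T^2 - mT + q \equiv (T-1)^2 \pmod{n_1}$, so $m \equiv 2 \pmod{n_1}$, i.e.\ $n_1 \mid (m-2)$. With $B = n_1$, $A = n_2$ and $AB = N$ this is exactly condition (1).

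Sufficiency I would handle in two stages. The admissible values of $m$ are precisely those furnished by Waterhouse's theorem: any $m$ with $p \nmid m$ and $m^2 \le 4q$, together with the supersingular traces $m = 0$, $m = \pm\sqrt{q}$, $m = \pm 2\sqrt{q}$, $m = \pm\sqrt{pq}$ under the congruence and parity conditions on $p$ and $r$ appearing in (2) and (3). So I would first invoke Waterhouse to produce a curve in the desired isogeny class, then adjust its group structure. In the ordinary case the curves in a fixed isogeny class are governed by the orders $\Z[\pi] \subseteq \mathcal{O} \subseteq \mathcal{O}_K$ in the imaginary quadratic field $K = \Q(\pi)$, and moving along $\ell$-isogenies realises every factorisation $N = AB$ subject to the divisibility $B \mid \gcd(q-1,\, m-2)$ obtained above; this is the converse of (1) and is essentially Rück's realisation theorem.

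The supersingular regimes (2) and (3) are where the real work lies and where I expect the main obstacle. Here $\End(E)$ is an order in a quaternion algebra, so the linear algebra over the commutative ring $\Z[\pi]$ breaks down and each trace value must be analysed separately. For $q$ an odd power of $p$ the only possibilities are $m = 0$, giving $N = q+1$, and (for $p = 2,3$) $m = \pm\sqrt{pq}$; whether the group is cyclic or splits off a $\Z/2\Z$ is decided by the $2$-power torsion, which is precisely where the dependence on $p \bmod 4$ enters, since for $p \equiv 3 \pmod 4$ the full $2$-torsion becomes rational. For $q$ an even power of $p$ the curve may acquire $m = \pm 2\sqrt{q}$, forcing the maximal split group $(\Z/(\sqrt{q} \mp 1)\Z)^2$, while $m = \pm\sqrt{q}$ and $m = 0$ require residue conditions modulo $3$ and modulo $4$ to pin down cyclicity. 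I would organise this as a finite case check over the Waterhouse trace list, using $n_1 \mid (q-1)$ and $n_1 \mid (m-2)$ to bound $n_1$ from above and then exhibiting explicit models such as $y^2 = x^3 + r$ or $y^2 = x^3 + ax$ to show each bound is attained.
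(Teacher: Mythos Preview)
The paper does not prove this theorem at all: it is stated with the attribution ``Theorem 2.1 in \cite{Vladut}'' and used as a black box in the subsequent sections. So there is no in-paper proof to compare against.

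That said, your outline is essentially the standard route by which results of this type (Waterhouse, R\"uck, Schoof, Vl\u{a}du\c{t}) are established. The necessity argument via $E(\F_q)=\ker(\pi-1)$, the Weil pairing to get $n_1\mid q-1$, and reduction of the characteristic polynomial modulo $n_1$ to get $n_1\mid m-2$ is exactly right for the ordinary case. For sufficiency, invoking Waterhouse for the admissible traces and then R\"uck's realisation theorem to hit every compatible factorisation $N=AB$ is the correct strategy. Your candid remark that the supersingular regimes are ``where the real work lies'' is accurate: the case checks there require more than you have written down (in particular, establishing that \emph{both} group shapes in 2(b) genuinely occur, and that cyclicity is forced in 2(a), 2(c), 3(b), 3(c), needs explicit curve constructions or a careful analysis of the $2$- and $3$-torsion under Frobenius), but the organising principle you describe---bounding $n_1$ from above via the divisibility constraints and then exhibiting models with $j=0$ or $j=1728$ to saturate the bound---is sound and is how the literature proceeds.
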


\begin{theorem}
If $E_1$ and $E_2$ are defined over $\F_q$ and $p\equiv 2 \mod 3$, then $\#E_1(\F_q)=\#E_2(\F_q)$ if and only if $E_1(F_q)\cong E_2(F_q)$.
\end{theorem}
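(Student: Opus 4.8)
\section*{Proof proposal}

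The plan is to prove the reverse implication, since the forward one is immediate: an isomorphism of finite groups preserves cardinality. So suppose $\#E_1(\F_q)=\#E_2(\F_q)$, where both curves have $j$-invariant $0$. Then $E_1$ and $E_2$ share the same trace of Frobenius $m=q+1-\#E_i(\F_q)$, and by Theorem \ref{os} both are supersingular. The strategy is to show that, under the hypothesis $p\equiv 2\bmod 3$, the isomorphism type of $E(\F_q)$ is completely determined by $m$ (equivalently, by the order), so that equal cardinalities force $E_1(\F_q)\cong E_2(\F_q)$. Since $\chari K\neq 2,3$, we may assume $p\geq 5$ throughout.

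First I would use Theorem \ref{order2mod3} to pin down the possible values of $m$ and then read off the structure from Vl\u{a}du\c{t}'s classification (Theorem \ref{major}). When $q$ is an even power of $p$, Theorem \ref{order2mod3} gives $m\in\{\pm 2\sqrt{q},\pm\sqrt{q}\}$. For $m=\pm 2\sqrt{q}$, case (3)(a) of Theorem \ref{major} forces $E(\F_q)\cong(\Z/(\sqrt{q}\mp 1)\Z)^2$, a single isomorphism type; for $m=\pm\sqrt{q}$, case (3)(b) applies precisely because $p\equiv 2\bmod 3$ and forces $E(\F_q)$ cyclic, hence $E(\F_q)\cong\Z/(q+1\mp\sqrt{q})\Z$. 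In both subcases the structure is uniquely determined by $m$. When $q$ is an odd power of $p$, Theorem \ref{order2mod3} gives $m=0$, so $\#E_i(\F_q)=q+1$ and we land in case (2) of Theorem \ref{major}. If $p\equiv 1\bmod 4$, subcase (2)(a) forces $E(\F_q)$ cyclic and we are done; the only remaining difficulty is $p\equiv 3\bmod 4$, where subcase (2)(b) permits either a cyclic group or $\Z/\tfrac{q+1}{2}\Z\times\Z/2\Z$.

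The main obstacle is therefore to rule out the non-cyclic option in subcase (2)(b). I would resolve this with a two-torsion count that exploits the special shape $E:y^2=x^3+B$ of a $j$-invariant-$0$ curve. The affine $2$-torsion points over $\F_q$ are exactly the points $(x_0,0)$ with $x_0^3=-B$. Since $q$ is an odd power of $p$ and $p\equiv 2\bmod 3$, one checks $q\equiv 2\bmod 3$, so $3\nmid q-1$ and cubing is a bijection on $\F_q$; hence $x^3=-B$ has a unique solution and $E(\F_q)[2]\cong\Z/2\Z$. A finite abelian group with exactly one element of order two has cyclic $2$-Sylow subgroup. On the other hand, since $p\equiv 3\bmod 4$ and $q$ is an odd power of $p$ we have $q\equiv 3\bmod 4$, so $4\mid q+1$, and the candidate group $\Z/\tfrac{q+1}{2}\Z\times\Z/2\Z$ would contain a copy of $\Z/2\Z\times\Z/2\Z$ and thus three elements of order two. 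This contradiction eliminates the non-cyclic option, so $E(\F_q)\cong\Z/(q+1)\Z$ is cyclic.

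Combining the cases, the isomorphism type of $E_i(\F_q)$ is a function of $m$ alone, so equal cardinalities yield $E_1(\F_q)\cong E_2(\F_q)$, completing the proof. I expect the two-torsion argument for the $p\equiv 3\bmod 4$ branch to be the one genuinely new ingredient: it is exactly the place where Vl\u{a}du\c{t}'s classification alone is insufficient and where the hypothesis $j=0$ (through the bijectivity of cubing when $p\equiv 2\bmod 3$) must be used in an essential way.
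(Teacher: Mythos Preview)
Your proposal is correct and follows the paper's overall strategy: split into odd and even powers of $p$, read off the possible traces from Theorem~\ref{order2mod3}, and apply Vl\u{a}du\c{t}'s classification (Theorem~\ref{major}), with the only genuine obstacle being the odd-power, $p\equiv 3\bmod 4$ subcase. The difference lies in how that subcase is dispatched. The paper asserts $E[2]\not\subseteq E(\F_p)$ and then invokes Theorem~4.1 of Wittmann to propagate cyclicity to $E(\F_q)$; you instead count $2$-torsion directly over $\F_q$, using that $q\equiv 2\bmod 3$ makes cubing a bijection on $\F_q$, and conclude $E(\F_q)[2]\cong\Z/2\Z$ is incompatible with the non-cyclic candidate because $4\mid q+1$. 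Both arguments rest on the same idea (the special form $y^2=x^3+B$ together with $p\equiv 2\bmod 3$ controls the rational $2$-torsion), but your version is more self-contained and avoids the external reference, at the cost of the small extra observation that $q\equiv 3\bmod 4$ forces $4\mid q+1$.
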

\begin{proof}
One direction of the theorem is very straightforward, so we will only show that same number of $\F_q$ rational points implies same group structure of $\F_q$ rational points.\\
\indent Firstly, we consider the case when $q$ is an odd power of $p$. By theorem \ref{order2mod3}, 
$$
    \#E(\F_q)=q+1 \text{ for all } E/\F_q.
$$
So we have the trace of Frobenius, denoted by $m$, is given by 
$$m=q+1-\#E(\F_q)=0.$$
Since $p\equiv 2\mod 3$, we can either have $p\equiv 1\mod 4$ or $p\equiv 3\mod 4$, which corresponds to (\ref{subthm: ord cyclic}) and (\ref{subthm: ss cyclic}) in Theorem \ref{major}. In the first case, $E(\F_q)$ is cyclic. In the latter case, $E(\F_q)$ is either cyclic or isomorphic to $\Z/(q+1)/2\times Z/2$ according to the theorem. However, since we know $E[2]\not\subseteq E(\F_p)$, by Theorem 4.1 in \cite{Wittmann}, $E(\F_q)$ can only be cyclic. 
Therefore, since they all have the same order, we know 
$$
    E_1(F_q)\cong \Z/(q+1)\Z\cong E_2(F_q).
$$

Now suppose instead that $q$ is an even power of $p$. Then according to theorem \ref{order2mod3}, 
$$
    m=q+1-\#E(\F_q)=\pm 2\sqrt{q} \text{ or }\pm \sqrt{q}.
$$ 
By using Theorem \ref{major}(3)(b), if $m=\pm\sqrt{q}$, $E(\F_q)$ is again cyclic, $i.e.$ 
$$
    E_1(\F_q)\cong A\cong E_2(\F_q) ,
$$  
where 
$$
    A\in \{  \Z/(q+1+\sqrt{q})\Z,\  \Z/(q+1-\sqrt{q})\Z\}.
$$
Likewise, Theorem \ref{major}(3)(c) says that if $m=\pm2\sqrt{q}$, then
$$
    E_1(\F_q)\cong B\cong E_2(\F_q),
$$
where 
$$
    B\in\{\Z/(\sqrt{q}-1)\times \Z/(\sqrt{q}-1),\ \Z/(\sqrt{q}+1)\times \Z/(\sqrt{q}+1) \}.
$$
Therefore, when two curves have the same number of $\F_q$ rational points, their group of $\F_q$ rational points are always isomorphic.
\end{proof}

\vspace{5mm}
\subsection{Group Structure of Ordinary Curves}~{}\\

According to Theorem \ref{os}, we know that $E$ is ordinary if $p\equiv 1\mod 3$. Therefore, our results for ordinary elliptic curves will hold here. We conclude that for elliptic curves $E_1,E_2$ defined over $\F_q$, if their $\F_q$-rational points have the same cardinality, then $E_1(\F_q)\cong E_2(\F_q)$.

\section{J-invariant 1728 Case}

Let $E$ be elliptic curves over field $K$ with j-invariant 1728. That means $E$ is of the form
\begin{equation}
    y^2 = x^3 + ax,
\end{equation}
where $a\in K$. Its automorphism group $Aut(E)$ has order 4, with one automorphism mapping $(x,y)$ to $(-x,iy)$.

\begin{theorem} \label{thm: 1728 order}
    Let $E$ be an elliptic curve defined over $\F_q$ and $p \equiv 3\mod 4$.
    \begin{enumerate}
        \item If $q$ is an odd power of $p$, then $\#E(\F_q) = q+1$.
        \item If $q$ is an even power of $p$, then $\#E(\F_q) \in \{q+1 \:, \:q+1\pm 2\sqrt{q}  \}$
    \end{enumerate}
\end{theorem}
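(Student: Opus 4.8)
The plan is to pin down the trace of Frobenius $m = q + 1 - \#E(\F_q)$ for the reference curve $E_0 : y^2 = x^3 + x$ and then recover every curve with $j$-invariant $1728$ by twisting. First I would show that over the prime field the trace vanishes. Writing $\#E_0(\F_p) = p + 1 + \sum_{x \in \F_p} \chi(x^3 + x)$, where $\chi$ is the quadratic character on $\F_p$ with $\chi(0) = 0$, I would pair each $x$ with $-x$: since $x^3 + x$ is odd and $p \equiv 3 \mod 4$ forces $\chi(-1) = -1$, the two terms cancel, the whole sum is $0$, and hence $\#E_0(\F_p) = p + 1$ and $m = 0$. In particular $p \mid m$, so $E_0$ (and, being a geometric invariant, every curve with $j = 1728$) is supersingular.

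Next I would compute the Frobenius over extensions. From $m = 0$ and $\pi_p \overline{\pi_p} = p$ the Frobenius satisfies $\pi_p^2 + p = 0$, so its eigenvalues are $\pm\sqrt{-p}$, and $\#E_0(\F_{p^r}) = p^r + 1 - (\pi_p^r + \overline{\pi_p}^{\,r})$. For $r$ odd the two $r$-th powers are negatives of one another and cancel, giving $\#E_0(\F_q) = q + 1$; for $r$ even they agree and sum to $2(-p)^{r/2} = \pm 2\sqrt{q}$, giving $\#E_0(\F_q) = q + 1 \mp 2\sqrt{q}$.

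It remains to pass from $E_0$ to an arbitrary $E : y^2 = x^3 + ax$ over $\F_q$. Every such curve is a quartic twist of $E_0$, classified by $\F_q^\times / (\F_q^\times)^4$, and twisting multiplies the Frobenius by the order-$4$ automorphism $(x,y) \mapsto (-x, iy)$, i.e. by some $\zeta \in \mu_4 \subset \Z[i]$; crucially this automorphism is defined over $\F_q$ only when $i \in \F_q$, that is when $4 \mid q - 1$. When $r$ is odd, $q \equiv 3 \mod 4$, so $i \notin \F_q$, only quadratic twists survive, and these merely negate $m = 0$; hence every $E$ has $\#E(\F_q) = q + 1$, which is part (1). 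When $r$ is even, $q \equiv 1 \mod 4$, the reference Frobenius $\pi_{E_0} = \pm\sqrt{q}$ is a rational integer, and each twist has trace $\zeta \pi_{E_0} + \overline{\zeta \pi_{E_0}} = \pm\sqrt{q}\,(\zeta + \overline{\zeta})$ with $\zeta + \overline{\zeta} \in \{0, \pm 2\}$; thus $m \in \{0, \pm 2\sqrt{q}\}$ and $\#E(\F_q) \in \{q+1,\, q+1 \pm 2\sqrt{q}\}$, which is part (2).

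The main obstacle is the twisting step: I must justify that a quartic twist multiplies the Frobenius by a fourth root of unity inside the CM algebra and keep careful track of the field over which the order-$4$ automorphism is defined. This bookkeeping is exactly what rules out the trace values $\pm\sqrt{q}$ that Theorem \ref{major} otherwise permits for supersingular curves (those belong to $j = 0$, not $j = 1728$). A related subtlety is that over $\F_p$ the supersingular Frobenius does not lie in $\Z[i]$ --- since $p \equiv 3 \mod 4$ is inert there --- and only after passing to an even-degree extension does it become the rational integer $\pm\sqrt{q}$, at which point $\Z[i]$ and hence the quartic twisting become available. The remaining ingredients, the character-sum evaluation and the power-of-Frobenius computation, are routine. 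This mirrors the structure of Theorem \ref{order2mod3} for $j = 0$, with fourth roots of unity replacing sixth roots.
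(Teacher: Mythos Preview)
The paper does not actually prove this theorem; it simply refers the reader to Theorems~3.6 and~3.7 of \cite{Kim}. Your proposal therefore supplies what the paper omits, and the outline is correct. The character-sum argument with the pairing $x\leftrightarrow -x$ and $\chi(-1)=-1$ gives trace $0$ over $\F_p$; the relation $\pi_p^{2}=-p$ then propagates this to all extensions; and the quartic-twist count---two classes when $q\equiv 3\pmod 4$, four when $q\equiv 1\pmod 4$---together with the centrality of $\pi_q=[\pm\sqrt{q}]$ in the even case yields exactly the trace sets $\{0\}$ and $\{0,\pm 2\sqrt{q}\}$ respectively.

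One cosmetic point: in the supersingular even-$r$ case the geometric endomorphism ring is a quaternion order, so rather than writing ``$\mu_4\subset\Z[i]$'' it is cleaner to argue that the order-$4$ automorphism $\iota$ satisfies $\iota^{2}=[-1]$ and hence has reduced trace $0$, whence the reduced trace of $\zeta\cdot[\pm\sqrt{q}]$ is $\pm\sqrt{q}\cdot\mathrm{Trd}(\zeta)\in\{0,\pm 2\sqrt{q}\}$. You already anticipate this subtlety in your final paragraph, so the plan stands as written.
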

\begin{proof}
    See Theorem 3.6 and 3.7 in \cite{Kim}.
\end{proof}



\begin{theorem}\label{soro}
    Let $E$ be an elliptic curve defined over $\F_q$ with $j$-invariant 1728.
    $E$ is supersingular if and only if $p\equiv 3\mod 4$.
\end{theorem}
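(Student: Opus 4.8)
The plan is to mirror the proof of Theorem \ref{os}, replacing the cubic/sextic input by its quartic analog. Since $\chari K \neq 2$, the prime $p$ is odd, so exactly one of $p \equiv 1 \bmod 4$ or $p \equiv 3 \bmod 4$ holds, and it suffices to prove the two implications separately. Throughout I will use that $E$ is supersingular precisely when $p$ divides its trace of Frobenius $m = q + 1 - \#E(\F_q)$.

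For the implication $p \equiv 3 \bmod 4 \Rightarrow$ supersingular, I would feed the order data of Theorem \ref{thm: 1728 order} directly into the trace. If $q$ is an odd power of $p$ then $\#E(\F_q) = q+1$, so $m = 0$ and trivially $p \mid m$. If $q$ is an even power of $p$ then $m \in \{0, \pm 2\sqrt q\}$; here $\sqrt q = p^{r/2}$ is a positive power of $p$, and since $p$ is odd we get $p \mid 2\sqrt q$, so again $p \mid m$. In every case $E$ is supersingular.

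For the converse I argue by contraposition: assuming $p \equiv 1 \bmod 4$ I show every $E/\F_q$ with $j(E)=1728$ is ordinary. First I reduce to a single curve over the prime field. Ordinariness is equivalent to $E[p] \cong \Z/p\Z$, and $E[p]$ is an invariant of the $\overline{\F_q}$-isomorphism class of $E$; since any two curves of $j$-invariant $1728$ are isomorphic over $\overline{\F_q}$ (by the isomorphism theorem for equal $j$-invariant), it suffices to exhibit one ordinary representative. I take $E_0 : y^2 = x^3 + x$ over $\F_p$ and, exactly as in Lemma \ref{extension}, note that ordinariness over $\F_p$ propagates to every extension $\F_q$. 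Thus the whole statement reduces to showing $E_0/\F_p$ is ordinary when $p \equiv 1 \bmod 4$.

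To finish I would invoke the classical quartic-residue point count for $y^2 = x^3 + ax$ over $\F_p$ (the $j=1728$ analog of the Gauss formula in Theorem \ref{thm: gauss}): writing $p = a^2 + b^2$ with $a,b \geq 1$, the trace $m$ lies in $\{\pm 2a, \pm 2b\}$. Each such value is nonzero and satisfies $|m| \leq 2\max(a,b) < 2\sqrt p < p$ (using $p \geq 5$), so $p \nmid m$ and $E_0$ is ordinary, completing the contrapositive in the style of Lemma \ref{ordinary}. The main obstacle is the reduction step for curves whose coefficient $a$ does not lie in $\F_p$: the literal twist/residue-class argument used for $j=0$ in Theorem \ref{os} need not place a representative of every quartic class inside $\F_p^{\times}$ when $r$ is even, so I would route this step through the $\overline{\F_q}$-isomorphism invariance of $E[p]$ rather than through an explicit change of coefficient. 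Establishing or citing the quartic point-count formula is then the only genuinely computational ingredient.
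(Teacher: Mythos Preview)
Your argument is correct and follows the same two-part template as the paper: for $p\equiv 3\bmod 4$ you compute the trace from Theorem~\ref{thm: 1728 order} exactly as the paper does, and for $p\equiv 1\bmod 4$ you invoke the quartic analog of Gauss's formula over $\F_p$ and then extend. The one genuine difference is in the reduction step for the converse. The paper simply instructs the reader to repeat the sextic-class trick of Theorem~\ref{os} in the quartic setting, i.e.\ to replace an arbitrary coefficient $a\in\F_q^\times$ by some $a'\in\F_p^\times$ in the same quartic class and then quote Lemmas~\ref{ordinary} and~\ref{extension}. You correctly observe that this literal replacement can fail (when $p\equiv 1\bmod 4$ and $4\mid r$, every element of $\F_p^\times$ is a fourth power in $\F_q^\times$, so non--fourth-power coefficients have no $\F_p^\times$ representative), and you sidestep the issue by using that $E[p]$, hence supersingularity, is a $\overline{\F_q}$-isomorphism invariant, so it suffices to check the single model $y^2=x^3+x$ over $\F_p$. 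This is cleaner and actually closes a gap that the paper's sketch leaves open; the cost is that you need to cite (rather than derive from the paper's toolkit) the quartic point-count $m\in\{\pm 2a,\pm 2b\}$ for $p=a^2+b^2$, after which your bound $|m|\le 2\max(a,b)<2\sqrt{p}<p$ finishes exactly in the spirit of Lemma~\ref{ordinary}.
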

\begin{proof}
Suppose $p\equiv 3\mod 4$. By Theorem \ref{thm: 1728 order}, we calculate the trace of Frobenius $m$, 
\[
m=\begin{cases}
0 & \text{ if }q \text{ is an odd power of }p\\
0 \text{ or } \pm 2\sqrt{q} & \text{ if }q \text{ is an even power of }p.
\end{cases}
\]
Since $p$ divides $m$ when $m=0$, and $p$ divides $2\sqrt{q}$ when $q$ is an even power of $p$, $E$ is supersingular. \\
\indent Now suppose $p\equiv 1\mod 4$. Use a similar technique to the proof of Theorem \ref{os}, by applying the Gauss's Theorem for $E: y^2=x^3+ax$ and extend the result from $\F_p$ to $\F_q$. This shows $E$ is ordinary. 
\end{proof}

\vspace{0.4cm}
\subsection{Group Structure of Supersingular Curves}~{}\\
By Theorem \ref{soro}, saying a elliptic curve defined over $\F_{q}$ with $j$-invariant 1728 is supersingular is equivalent to saying $p\equiv 3\mod 4$. Since the we considered the trace of Frobenius in two cases, namely when $q$ is an odd power of $p$ and when $q$ is an even power of $p$, we may also discuss the group structure of $\F_q$ rational points separately according to these two cases.

\vspace{0.4cm}
\subsubsection{When $q$ is an odd power of $p$}~{}\\
The group structure of $\F_q$ rational points in this case corresponds to Theorem \ref{major} (\ref{subthm: ss cyclic}), from which we can see that the there are two possible group structures for $E(\F_q)$. However, when $q$ is an odd power of $p$, the order of $E(\F_q)$ is uniquely given by $\#E(\F_q)=q+1$. So two elliptic curves having the same number of $\F_q$-rational points does not imply their group of $\F_q$-rational points are isomorphic. Example \ref{ex: supersing fail} shows an example of this statement.

\begin{example} \label{ex: supersing fail}
    Over $\F_7$, there are six elliptic curves all of order 8. So, all the curves are isogeneous by Sato-Tate Theorem, forming one isogeny class. But, there are two isomorphism classes with distinct group structures. In this case these are $\znz{8}$ and $\znzznz{2}{4}$. See the cases when $j=1728$ and $r$ is odd in Appendix for more examples.
\end{example}

On the other hand, in the following example where $q$ is an even power of $p$, same order of $\F_q$ rational points does imply the isomorphic group structure. In fact, this is true whenever $q$ is an even power of $p$ and we will prove this statement in next section of this type.
\begin{example}
    Over $\F_{7^2}$, there are 48 elliptic curves of three distinct orders, forming three isogeny classes, by Sato-Tate Theorem. 
    But, there are four isomorphism classes. 
    All the curves within the same isogeny class have the same group structure. However, two of the isomorphism classes contain curves of order 50. See the cases when $j=1728$ and $r$ is even in Appendix for more examples of this type.
\end{example}

\vspace{0.4cm}
\subsubsection{When $q$ is an even power of $p$}

\begin{theorem}
    If $E_1,E_2$ be supersingular elliptic curves over $\F_q$ with $r$ even and $j$-invariant 1728, then
 $$
        \# E_1(\F_{q}) = \# E_2(\F_{q}) \iff E_1(\F_{q}) \cong E_2(\F_{q}).
 $$
\end{theorem}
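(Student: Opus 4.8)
The plan is to prove the nontrivial reverse implication---that equal cardinality forces isomorphic group structure---by showing that for supersingular curves with $j$-invariant $1728$ over $\F_q$ with $q$ an even power of $p$, the isomorphism type of $E(\F_q)$ is completely determined by $\#E(\F_q)$. The forward direction is immediate, since isomorphic finite groups have equal order.

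First I would invoke Theorem \ref{soro} to note that supersingularity together with $j$-invariant $1728$ forces $p \equiv 3 \mod 4$. Since $r$ is even, $q$ is an even power of $p$, so Theorem \ref{thm: 1728 order} restricts the possible cardinalities to $\#E(\F_q) \in \{q+1,\, q+1 \pm 2\sqrt{q}\}$; equivalently, the trace of Frobenius $m = q+1 - \#E(\F_q)$ lies in $\{0,\, \pm 2\sqrt{q}\}$. These three orders are pairwise distinct (as $\sqrt{q} \neq 0$), so equal cardinality is equivalent to equal $m$.

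Next I would apply Vl\u{a}du\c{t}'s classification, Theorem \ref{major}, in the regime where $q$ is an even power of $p$---that is, part (3). The key observation is that in each admissible case only one branch of the theorem applies, and it pins down the group uniquely. When $m = \pm 2\sqrt{q}$, since $p$ is odd and $\sqrt{q}$ is a power of $p$ we have $p \mid m$, so case (1) is excluded and case (3)(a) gives $E(\F_q) \cong (\Z/(\sqrt{q} \mp 1)\Z)^2$. When $m = 0$, again $p \mid m$ excludes case (1), and since $p \equiv 3 \mod 4$ we land in case (3)(c), forcing $E(\F_q)$ cyclic, hence $E(\F_q) \cong \Z/(q+1)\Z$. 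Thus each value of $m$ determines the isomorphism type of $E(\F_q)$ unambiguously.

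Finally, combining these facts: if $\#E_1(\F_q) = \#E_2(\F_q)$ then $m_1 = m_2$, and by the case analysis above $E_1(\F_q)$ and $E_2(\F_q)$ share the same isomorphism type, so $E_1(\F_q) \cong E_2(\F_q)$. The main obstacle---really a point requiring care rather than a genuine difficulty---is verifying that the divisibility and residue hypotheses in Theorem \ref{major} (namely $p \mid m$ to rule out case (1), and $p \equiv 2,3 \mod 4$ in case (3)(c)) are all met here, so that exactly one branch governs each admissible order and no ambiguity in group structure can arise.
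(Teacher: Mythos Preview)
Your proposal is correct and follows essentially the same approach as the paper: reduce to $p\equiv 3\bmod 4$ via Theorem~\ref{soro}, enumerate the three admissible traces $m\in\{0,\pm 2\sqrt{q}\}$ from Theorem~\ref{thm: 1728 order}, and then apply Vl\u{a}du\c{t}'s classification (Theorem~\ref{major}(3)(a),(c)) to see that each value of $m$ forces a unique group structure. If anything, your version is slightly more careful in explicitly verifying that $p\mid m$ rules out case~(1) of Theorem~\ref{major}, a point the paper leaves implicit.
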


\begin{proof}
    $E_1(\F_{q}) \cong E_2(\F_{q})$ implies $\# E_1(\F_{q}) = \# E_2(\F_{q})$ by properties of isomorphic groups. So we only need to show that same order of $\F_q$ rational points imply the same group structure. 
    
    In proof of Theorem \ref{soro}, we showed that when $p\equiv 3\mod 4$ and $q$ is an even power of $p$, $m\in \{ 0, 2\sqrt{q}, -2\sqrt{q}\}$.
    When $m=0$, $\#E_1(\F_q)=\#E_2(\F_q)=q+1$. By Theorem \ref{major} (3c), we know both $E_1(\F_q)$ and $E_2(\F_q)$ are cyclic, that is 
    $$
    E_1(\F_q)\cong Z/(q+1)\cong E_2(\F_q).
    $$
    When $m=2\sqrt{q}$, $\#E_1(\F_q)=\#E_2(\F_q)=p^r+1-2\sqrt{q}$. By Theorem \ref{major} (3a), we have 
    $$
    E_1(\F_q)\cong A\cong E_2(\F_q),
    $$
    where $A$ is given by 
    $$
    A=\Z/(\sqrt{q}-1)\times \Z/(\sqrt{q}-1).
    $$
    When $m=-2\sqrt{q}$, $\#E_1(\F_q)=\#E_2(\F_q)=p^r+1+2\sqrt{q}$. By (3)(a) in Theorem \ref{major}, we have 
    $$
    E_1(\F_q)\cong B\cong E_2(\F_q),
    $$
    where $B$ is given by 
    $$
    B=\Z/(\sqrt{q}+1)\times \Z/(\sqrt{q}+1).
    $$
    Therefore, we may conclude that when $q$ is an even power of $p$ and $p\equiv 3\mod 4$, two elliptic with $j$-invariant 1728 have the same number of $\F_q$ rational points if and only if their group of $\F_q$ rational points are isomorphic. 
\end{proof}

\vspace{0.4cm}
\subsection{Group Structure of Ordinary Curves}~{}\\
According to Theorem \ref{soro}, $E(\F_q)$ with $j$-invariant 1728 is ordinary if and only if $p\equiv 1\mod 4$. Then by Theorem \ref{thm: ord iso}, we have $\#E_1(K) = \#E_2(K)$ if and only if $E_1(K) \cong E_2(K)$.

\section{General Case: Not Depending on J-invariant}
In the preceding sections, we discussed when two elliptic curves with the same number of $\F_q$-rational points have isomorphic group structures of $\F_q$-rational points. Notice that all of the previous discussions require the two elliptic curves to have the same $j$-invariant. In this section, we provide a method to decide whether two elliptic curves $E_1,E_2$ defined over a finite field $\F_q$ have isomorphic groups of $F_q$ rational points, instead with the assumption that $\#E_1(F_q)=\#E_2(F_q)$. More details can be found in \cite{Heuberger}.

\indent Before we introduce this method, we first introduce some background knowledge about endomorphism rings and the Frobenius endomorphism. Let $\tau$ be the Frobenius endomorphism for an ordinary elliptic curve $E/\F_q$. It is well known that the endomorphism algebra of $E$, denoted $\Q(\tau)$, must be an imaginary quadratic field, and more specifically it is of the form $\Q(\sqrt{m})$ for some square free integer $m<0$. Thus, the endomorphism ring of $E$, being an order of the endomorphism algebra, can be written as 
$$
\text{End}(E)=\Z\oplus g\Z \delta,
$$
where $g$ is the conductor of this order, defined as the set $\{x \in \Q(\tau): x\mathcal{O}_K \subset \mathcal{O}\}$, and $\delta=\sqrt{m}$ if $m\equiv2,3\mod4$ and $\delta=\frac{1+\sqrt{m}}{2}$ if $m\equiv 1\mod 4$. Therefore, the Frobenius endomorphism $\tau$ can also be written as $\tau=a+b\delta$ for some integers $a,b$.

\begin{theorem}[\cite{Heuberger}, Theorem 2.4]
Let $E_1/\F_q$ and $E_2/\F_q$ be ordinary elliptic curves with $\#E_1(\F_q)=\#E_2(\F_q)$. Let $\tau=a+b\delta$ be their Frobenius endomorphism relative to $\F_q$, and let $End(E_1)=\mathcal{O}_1$ and $End(E_2)=\mathcal{O}_2$ be the orders in $\Q(\tau)$ of conductor $g_1$ and $g_2$ respectively. Let $\mathcal{P}:=\{p\text{ }\vert \text{ }v_p(g_1)\neq v_p(g_2) \text{ and } p \text{ is prime}\}$. For every $p\in \mathcal{P}$, define $s_p = \max \{v_p(g_1),v_p(g_2)\}$. \\
\indent Fix an integer $k\geq 1$, and let $\tau^k=a_k+b_k\delta$ for some $a_k,b_k\in \mathbb{Z}$. Then $$E_1(F_q)\cong E_2(F_q) \Leftrightarrow
v_p(a_k-1)\leq v_p(b_k)-s_p \text{ for all } p\in\mathcal{P},
$$
where $v_p(a)$ of an integer $a$ is the $p$-adic valuation of $a$.
\end{theorem}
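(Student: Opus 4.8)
The plan is to pass from the groups of rational points to quotients of the endomorphism rings, where the comparison becomes a computation with integer matrices. The structural input I would use is the standard description of the rational points of an ordinary curve via its endomorphism ring: for $E/\F_q$ ordinary with Frobenius $\tau$ and $\End(E)=\mathcal{O}$, one has a group isomorphism $E(\F_{q^k})\cong \mathcal{O}/(\tau^k-1)\mathcal{O}$ for every $k\geq 1$. This comes from the Tate module $T_\ell E$ being free of rank one over $\mathcal{O}\otimes\Z_\ell$, together with the identification of rational points as the fixed points of Frobenius, i.e.\ the cokernel of $\tau^k-1$. Since $E_1$ and $E_2$ are isogenous they share the algebra $\Q(\tau)$ and the element $\tau$, so the assertion reduces to deciding when the two finite abelian groups $\mathcal{O}_1/(\tau^k-1)\mathcal{O}_1$ and $\mathcal{O}_2/(\tau^k-1)\mathcal{O}_2$ are isomorphic.

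Next I would reduce this to a single numerical invariant. Each $\mathcal{O}_i$ is a free $\Z$-module of rank two, so each cokernel is a quotient of $\Z^2$ and hence of the form $\Z/d_1^{(i)}\Z\oplus \Z/d_2^{(i)}\Z$ with $d_1^{(i)}\mid d_2^{(i)}$, read off from the Smith normal form of the matrix of multiplication by $\tau^k-1$. Both groups have order equal to the field norm $N(\tau^k-1)=\#E(\F_{q^k})$, which is the same for the two curves because they are isogenous; hence $d_1^{(1)}d_2^{(1)}=d_1^{(2)}d_2^{(2)}$, and the two groups are isomorphic if and only if their first invariant factors agree, $d_1^{(1)}=d_1^{(2)}$. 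The first invariant factor equals the gcd of the entries of the multiplication matrix, so everything comes down to computing that gcd for each $i$.

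For the computation I would fix the $\Z$-basis $\{1,\,g_i\delta\}$ of $\mathcal{O}_i=\Z+g_i\mathcal{O}_K$; here $\tau,\tau^k\in\mathcal{O}_i$ forces $g_i\mid b$ and $g_i\mid b_k$, so $\tau^k-1=(a_k-1)+b_k\delta$ lies in $\mathcal{O}_i$ and $\beta_i:=b_k/g_i\in\Z$. Writing out multiplication by $(a_k-1)+b_k\delta$ and using $\delta^2=m$ when $m\equiv 2,3\pmod 4$ (respectively $\delta^2=\delta+\tfrac{m-1}{4}$ when $m\equiv 1\pmod 4$) produces an explicit $2\times 2$ integer matrix $M_i$ whose determinant is $N(\tau^k-1)$. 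The key local claim, which I would verify prime by prime in both cases for $\delta$, is that the remaining matrix entries are redundant, so that
\[
v_p\!\left(d_1^{(i)}\right)=v_p\!\left(\gcd(\text{entries of }M_i)\right)=\min\bigl\{\,v_p(a_k-1),\ v_p(b_k)-v_p(g_i)\,\bigr\}
\]
for every prime $p$.

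Finally I would compare the two sides locally. For $p\notin\mathcal{P}$ we have $v_p(g_1)=v_p(g_2)$, so $v_p(d_1^{(1)})=v_p(d_1^{(2)})$ automatically. For $p\in\mathcal{P}$, say $v_p(g_1)<v_p(g_2)=s_p$, an elementary comparison of the two minima shows that $\min\{v_p(a_k-1),\,v_p(b_k)-v_p(g_1)\}=\min\{v_p(a_k-1),\,v_p(b_k)-s_p\}$ holds exactly when $v_p(a_k-1)\leq v_p(b_k)-s_p$. Running this over all $p\in\mathcal{P}$ gives $d_1^{(1)}=d_1^{(2)}$, and hence the isomorphism, precisely under the stated condition. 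The main obstacle is the structural isomorphism $E(\F_{q^k})\cong\mathcal{O}/(\tau^k-1)\mathcal{O}$, which carries the conceptual content and rests on CM and Tate-module theory; after that the argument is a careful but routine Smith normal form calculation, whose only subtlety is checking, in both forms of $\delta$, that the off-diagonal and trace entries never lower the local gcd below $\min\{v_p(a_k-1),\,v_p(b_k)-v_p(g_i)\}$.
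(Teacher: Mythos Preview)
The paper does not supply its own proof of this theorem; it is simply quoted from Heuberger--Mazzoli \cite{Heuberger}. Your outline is correct and is essentially the argument given in that reference: Lenstra's structural isomorphism $E(\F_{q^k})\cong \mathcal{O}/(\tau^k-1)\mathcal{O}$ reduces the problem to comparing the first invariant factors of the two multiplication-by-$(\tau^k-1)$ matrices, and your local gcd identity $v_p(d_1^{(i)})=\min\{v_p(a_k-1),\,v_p(b_k)-v_p(g_i)\}$ (valid in both cases for $\delta$, as you note) together with the elementary case split at primes in $\mathcal{P}$ is exactly their computation.
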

Thus, one possible way to apply this theorem is by fixing $k$ to be, say, 1. After, carrying out the calculation of the above $p$-adic valuations, we will be able to see if $E_1(F_q)\cong E_1(F_q)$ given $\#E_1(F_q)=\#E_2(F_q)$, where $E_1$ and $E_2$ are of possibly different $j$-invariant.

\section*{Appendix: Data}\label{data}
\begin{center}
    \begin{tabular}{ |l|l|l|l|l|}
    \hline
    \multicolumn{5}{|c|}{$j = 0$, $r = 1$} \\
    \hline
    p & Order of EC's in Isog. Class & No. of EC & Group Structure(s) & Success \\
    \hline
    \multirow{1}{4em}{5} & 6 & 4 & $\znz{6}$ & Yes \\ 
    \hline
    \multirow{6}{4em}{7} & 24 & 1 & $\znzznz{2}{12}$ & \\
    & 9 & 1 & $\znzznz{3}{3}$ & Yes \\
    & 13 & 1 & $\znz{13}$ & \\
    & 3 & 1 & $\znz{3}$ & \\
    & 7 & 1 & $\znz{7}$ & \\
    & 4 & 1 & $\znzznz{2}{2}$ & \\
    \hline
    \multirow{1}{4em}{11} & 12 & 10 & $\znz{12}$ & Yes\\
    \hline
    \multirow{6}{4em}{13} & 12 & 2 & $\znzznz{2}{6}$ & \\
    & 19 & 2 & $\znz{19}$ & Yes \\
    & 9 & 2 & $\znzznz{3}{3}$ & \\
    & 21 & 2 & $\znz{21}$ & \\
    & 16 & 2 & $\znzznz{4}{4}$ & \\
    & 7 & 2 & $\znz{7}$ & \\
    \hline
    \multirow{1}{4em}{17} & 18 & 16 & $\znz{18}$ & Yes \\
    \hline
    \end{tabular}
    
    \vskip2em
    \begin{tabular}{ |l|l|l|l|l|}
    \hline
    \multicolumn{5}{|c|}{$j = 0$, $r = 2$} \\
    \hline
    p & Order of EC's in Isog. Class & No. of EC & Group Structure(s) & Success \\
    \hline
    \multirow{4}{4em}{5} & 31 & 8 & $\znz{31}$ &  \\ 
    & 21 & 8 & $\znz{21}$ & Yes \\ 
    & 16 & 4 & $\znzznz{4}{4}$ & \\ 
    & 36 & 4 & $\znzznz{6}{6}$ & \\
    \hline
    \multirow{6}{4em}{7} & 37 & 8 & $\znz{37}$ & \\
    & 39 & 8 & $\znz{39}$ & Yes \\
    & 52 & 8 & $\znzznz{2}{26}$ & \\
    & 63 & 8 & $\znzznz{3}{21}$ & \\
    & 61 & 8 & $\znz{61}$ & \\
    & 48 & 8 & $\znzznz{4}{12}$ & \\
    \hline
    \multirow{4}{4em}{11} & 133 & 40 & $\znz{133}$ & \\
    & 111 & 40 & $\znz{111}$ & Yes \\
    & 100 & 20 & $\znzznz{10}{10}$ & \\
    & 144 & 20 & $\znzznz{12}{12}$ & \\
    \hline
    \end{tabular}
    \vskip2em
    
    \begin{tabular}{ |l|l|l|l|l|}
    \hline
    \multicolumn{5}{|c|}{$j = 0$, $r = 3$} \\
    \hline
    p & Order of EC's in Isog. Class & No. of EC & Group Structure(s) & Success \\
    \hline
    \multirow{1}{4em}{5} & 126 & 124 & $\znz{126}$ & Yes \\
    \hline
    \multirow{6}{4em}{7} & 361 & 57 & $\znzznz{19}{19}$ & \\
    & 381 & 57 & $\znz{381}$ & Yes \\
    & 364 & 57 & $\znzznz{2}{182}$ & \\
    & 327 & 57 & $\znz{327}$ & \\
    & 307 & 57 & $\znz{307}$ & \\
    & 324 & 57 & $\znzznz{18}{18}$ & \\
    \hline
    \multirow{1}{4em}{11} & 1332 & 1330 & $\znz{1332}$ & Yes\\
    \hline
    \end{tabular}
    \vskip2em

    \begin{tabular}{ |l|l|l|l|l|}
    \hline
    \multicolumn{5}{|c|}{$j = 1728$, $r = 1$} \\
    \hline
    p & Order of EC's in Isog. Class & No. of EC & Group Structure(s) & Success \\
    \hline
    \multirow{4}{4em}{5} & 4 & 1 & $\znzznz{2}{2}$ & \\
    & 2 & 1 & $\znz{2}$ & Yes \\
    & 10 & 1 & $\znz{10}$ & \\
    & 8 & 1 & $\znzznz{2}{4}$ & \\
    \hline
    \multirow{1}{4em}{7} & 8 & 6 & $\znz{8}$, $\znzznz{2}{4}$ & No \\
    \hline
    \multirow{1}{4em}{11} & 12 & 10 & $\znzznz{2}{6}$, $\znz{12}$ & No\\
    \hline
    \multirow{4}{4em}{13} & 20 & 3 & $\znzznz{2}{10}$ & \\
    & 10 & 3 & $\znz{10}$ & Yes \\
    & 8 & 3 & $\znzznz{2}{4}$ & \\
    & 18 & 3 & $\znzznz{3}{6}$ & \\
    \hline
    \multirow{4}{4em}{17} & 16 & 4 & $\znzznz{4}{4}$ & \\
    & 20 & 4 & $\znzznz{2}{10}$ & Yes \\
    & 26 & 4 & $\znz{26}$ & \\
    & 10 & 4 & $\znz{10}$ & \\
    \hline
    \end{tabular}
    \vskip2em
    
    \begin{tabular}{ |l|l|l|l|l|}
    \hline
    \multicolumn{5}{|c|}{$j = 1728$, $r = 2$} \\
    \hline
    p & Order of EC's in Isog. Class & No. of EC & Group Structure(s) & Success \\
    \hline
    \multirow{4}{4em}{5} & 18 & 6 & $\znzznz{3}{6}$ &  \\ 
    & 20 & 6 & $\znzznz{2}{10}$ & Yes \\ 
    & 34 & 6 & $\znz{34}$ & \\ 
    & 32 & 6 & $\znzznz{4}{8}$ & \\
    \hline
    \multirow{3}{4em}{7} & 50 & 24 & $\znz{50}$ & \\
    & 36 & 12 & $\znzznz{6}{6}$ & Yes \\
    & 64 & 12 & $\znzznz{8}{8}$ & \\
    \hline
    \multirow{3}{4em}{11} & 122 & 60 & $\znz{122}$ & \\
    & 100 & 30 & $\znzznz{10}{10}$ & Yes \\
    & 144 & 30 & $\znzznz{12}{12}$ & \\
    \hline
    \end{tabular}
    
    \vskip2em
    \begin{tabular}{ |l|l|l|l|l|}
    \hline
    \multicolumn{5}{|c|}{$j = 1728$, $r = 3$} \\
    \hline
    p & Order of EC's in Isog. Class & No. of EC & Group Structure(s) & Success \\
    \hline
    \multirow{4}{4em}{5} & 130 & 31 & $\znz{130}$ &  \\ 
    & 104 & 31 & $\znzznz{2}{52}$ & Yes \\ 
    & 122 & 31 & $\znz{122}$ & \\ 
    & 148 & 31 & $\znzznz{2}{74}$ & \\
    \hline
    \multirow{1}{4em}{7} & 344 & 342 & $\znzznz{2}{172}$, $\znz{344}$ & No \\
    \hline
    \multirow{1}{4em}{11} & 1332 & 1330 & $\znzznz{2}{666}$, $\znz{1332}$ &  No\\
    \hline
    \end{tabular}
\end{center}


\end{document}